\setlist[1]{wide}
\setlist[2]{leftmargin=15mm}
\setlist[enumerate]{label=\rm{(\roman*)}}
\setlist[enumerate,2]{label=\rm({\it\roman*}), }
\setlist[itemize]{label=\raisebox{0.25ex}{\tiny$\bullet$}}
\renewcommand\subsection{\@startsection{subsection}{1}%
  \z@{.5\linespacing\@plus.7\linespacing}{-.5em}%
  {\normalfont}}
\def\@seccntformat#1{%
  \protect\textup{\protect\@secnumfont
    \ifnum\pdfstrcmp{subsection}{#1}=0 \bfseries\fi% subsection # in \bfseries
    \csname the#1\endcsname
    \protect\@secnumpunct
  }%
}  
\numberwithin{equation}{section}
\newtheorem{theorem}{Theorem}[section]
\newtheorem{proposition}[theorem]{Proposition}
\newtheorem{lemma}[theorem]{Lemma}
\newtheorem{corollary}[theorem]{Corollary}
\newtheorem{question}[theorem]{Question}
\theoremstyle{definition}
\newtheorem{definition}[theorem]{Definition}
\newtheoremstyle{customNumber}
     {}          % Space above (empty = default)
     {}          % Space below
     {\itshape}  % Body font
     {}          % Indent amount (empty = no indent)
     {\bfseries} % Thm head font
     {.}         % Punctuation after thm head
     { }         % Space after thm head (\newline = linebreak)
     {\thmname{#1}\thmnumber{ #2}\thmnote{ #3}}
\theoremstyle{customNumber}
\renewcommand{\phi}{\varphi}
\DeclareMathOperator{\Z}{\mathbb{Z}}
\newcommand{\easy}{\operatorname}
\newcommand{\RomanNumeralCaps}[1]
{\MakeUppercase{\romannumeral #1}}
\title[Real forms on rational surfaces]
{Real forms on rational surfaces}
\author{Anna Bot}
\address{}
\subjclass[2020]{14J50, 14J26, 14P05}
\keywords{Rational surfaces, real forms, real structures}
\begin{document}

\begin{abstract}
	For any positive integer $r$, we construct a smooth complex projective rational surface which has at least $r$ real forms not isomorphic over $\mathbb{R}$.
\end{abstract}

\thanks{}
\maketitle

\tableofcontents

\section{Introduction}

% MSC classes: 
% 14J50 (14J26, 14P05)

\par A \textit{real form} of a complex algebraic variety $X$ is a real algebraic variety whose complexification is isomorphic to $X$. For many families of complex projective algebraic varieties it is known that the number of isomorphism classes over $\mathbb{R}$ of real forms is finite, as for example in the cases of abelian varieties \cite{MR678890}, \cite[{Appendix D}]{MR1795406} (based on \cite{MR181643} for abelian varieties as algebraic groups), algebraic surfaces of Kodaira dimension greater than or equal to one \cite[{Appendix D}]{MR1795406}, minimal algebraic surfaces \cite[{Appendix D}]{MR1795406}, Del Pezzo surfaces \cite{MR1954070} and compact hyperk\"ahler manifolds \cite{MR4023392}. Different authors have raised this finiteness question for nonminimal rational surfaces: \cite[pages 232-233]{MR1795406}, \cite[Problem, page 1128]{MR3473660}, \cite[page 943]{MR3934593} and \cite[Question 1.5]{2020arXiv200204737D}. More precisely, at the time this paper appeared, the following question was still open: 
\begin{question}[{\cite[Problem, page 1128]{MR3473660}, \cite[Question 1.5]{2020arXiv200204737D}}] \label{question}
	Is there a smooth complex projective rational surface with infinitely many mutually nonisomorphic real forms?
\end{question}
\par The first example of a complex projective variety with infinitely many nonisomorphic real forms was obtained by Lesieutre \cite[Theorem 2]{MR3773792}, for varieties of dimension $6$. It was then generalised in \cite[Theorem 1.1]{MR3934593} by Dinh and Oguiso to any dimension $d \geq 2$ for varieties of Kodaira dimension $d-2$, and in \cite{2020arXiv200204737D} by Dinh, Oguiso and Yu to any dimension greater or equal to three for rational varieties. See also Dubouloz, Freudenburg and Moser-Jauslin \cite{dubouloz2020smooth} for examples of rational affine varieties of dimension greater or equal to four. But as expressed by Kharlamov in \cite{MR1936747}, \enquote{[...] what concerns this finiteness problem, the case of non minimal rational surfaces looks to be the most complicated and enigmatic,} or as Cattaneo and Fu \cite{MR4023392} describe, \enquote{[the] remaining biggest challenge for surfaces seems to be the case of rational surfaces.}

\par In fact, in \cite{{MR3773792}, {MR3934593}, {2020arXiv200204737D}}, all constructions of projective varieties with infinitely many nonisomorphic real forms come from projective varieties $X$ such that $\easy{Aut}(X)/\easy{Aut}^0(X)$ is not finitely generated. Whether $\easy{Aut}(X)/\easy{Aut}^0(X)$ is not finitely generated for projective rational surfaces is still open. As an indication of where to look for potentially infinitely many isomorphism classes of real forms in this situation, Benzerga \cite{MR3473660} proved that if such a complex projective rational surface exists, then it will be the blow-up of $\mathbb{P}^2_{\mathbb{C}}$ in at least ten points and have at least one automorphism of positive entropy. Also, due to \cite{MR4060185}, the automorphism group of this surface will need to contain a subgroup isomorphic to $\mathbb{Z} \ast \mathbb{Z}$.

\par A related question is whether for smooth complex projective rational surfaces, there exists an upper bound on the number of isomorphism classes of real forms. We can answer this in the negative:
\begin{theorem} \label{MAIN THEOREM}
	For every positive integer $r$, there exists a smooth complex projective rational surface with at least $r$ real forms not pairwise isomorphic over $\mathbb{R}$.
\end{theorem}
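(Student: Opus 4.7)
The key input is the classical bijection between real forms of a smooth complex projective variety $X$ (up to $\mathbb{R}$-isomorphism) and $\mathrm{Aut}(X)$-conjugacy classes of antiholomorphic involutions (real structures) on $X$. It therefore suffices, given $r$, to construct a smooth complex projective rational surface admitting $r$ real structures that are pairwise non-conjugate by elements of $\mathrm{Aut}(X)$.

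The plan is to build $X$ as the blow-up of $\mathbb{P}^2_{\mathbb{C}}$ at a finite configuration $P$ of points in general position, so chosen that $P$ is invariant under $r$ different antiholomorphic involutions $\sigma_1, \ldots, \sigma_r$ of $\mathbb{P}^2_{\mathbb{C}}$, with pairwise distinct numbers $k_i := |P \cap \mathrm{Fix}(\sigma_i)|$ of $\sigma_i$-fixed points. Since every antiholomorphic involution of $\mathbb{P}^2_{\mathbb{C}}$ is $\mathrm{PGL}_3(\mathbb{C})$-conjugate to standard complex conjugation $\sigma_0$, constructing the $\sigma_i$ amounts to producing $g_1, \ldots, g_r \in \mathrm{PGL}_3(\mathbb{C})$ with each $g_i^{-1}(P)$ stable under $\sigma_0$, but with $|g_i^{-1}(P) \cap \mathbb{RP}^2|$ taking $r$ different values. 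An explicit such configuration can be assembled by placing points on several different real planes $\mathbb{RP}^2 \subset \mathbb{P}^2_{\mathbb{C}}$, arranging shared and unshared points so that the decompositions into real points and conjugate pairs vary from one $\sigma_i$ to the next, while preserving genericity.

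Once this is done, each $\sigma_i$ lifts to an antiholomorphic involution $\widetilde{\sigma_i}$ of $X = \mathrm{Bl}_P\mathbb{P}^2_{\mathbb{C}}$. A local computation identifies the real locus as
\[
X(\mathbb{R})^{\widetilde{\sigma_i}} \;\cong\; \mathbb{RP}^2\,\#\,k_i\,\mathbb{RP}^2,
\]
because blowing up a $\sigma_i$-real point replaces a disk in the real locus by a M\"obius band, while pairs of $\sigma_i$-conjugate points contribute no real points. The homeomorphism type of the real locus is an $\mathrm{Aut}(X)$-conjugation invariant of a real structure, so pairwise distinct $k_i$'s force pairwise non-homeomorphic real loci, and therefore $\widetilde{\sigma_1}, \ldots, \widetilde{\sigma_r}$ represent at least $r$ distinct isomorphism classes of real forms of $X$. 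For $|P|$ large enough and in sufficiently general position, one also knows $\mathrm{Aut}(X)$ embeds into the automorphisms of $\mathbb{P}^2$ fixing $P$ setwise, so no exotic automorphism of $X$ can conjugate one $\widetilde{\sigma_i}$ to another.

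The main obstacle is the first step: exhibiting one configuration $P$ that is simultaneously invariant under $r$ distinct real structures on $\mathbb{P}^2$ realizing $r$ distinct fixed-point counts, while remaining in sufficiently general position to avoid unexpected automorphisms of the blow-up. Invariance under multiple antiholomorphic involutions is a strong non-generic constraint pulling in the opposite direction from general position, and balancing them through an explicit combinatorial-geometric construction is the heart of the argument.
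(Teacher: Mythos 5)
Your reduction to conjugacy classes of real structures, the lifting of antiholomorphic involutions of $\mathbb{P}^2$ that preserve the blown-up set $P$, and the use of the homeomorphism type of the real locus (via the connected-sum formula $\mathbb{RP}^2\#\,k_i\,\mathbb{RP}^2$) as a conjugation invariant are all correct. The gap is precisely the step you defer to the end as ``the heart of the argument'': the configuration $P$ you need does not exist once $r$ is large, so the obstacle is not technical but a genuine obstruction. All of your candidate real structures descend to antiholomorphic involutions $\sigma_1,\dots,\sigma_r$ of $\mathbb{P}^2$ preserving $P$. Since you also require $P$ to be in general position (you need this to control $\operatorname{Aut}(X)$, and in any case $P$ must contain four points with no three collinear), the group $N=\{g\in\mathrm{PGL}_3(\mathbb{C}):g(P)=P\}$ is finite, and all the $\sigma_i$ lie in the finite group $\widehat{N}=N\sqcup N\sigma_1$. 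If $\sigma_i=g\sigma_j g^{-1}$ with $g\in N$, then $g$ lifts to $\operatorname{Aut}(X)$ and $|P\cap\operatorname{Fix}(\sigma_i)|=|g(P)\cap g(\operatorname{Fix}(\sigma_j))|=|P\cap\operatorname{Fix}(\sigma_j)|$, so pairwise distinct counts $k_i$ force the $\sigma_i$ to represent pairwise distinct classes in $H^1(\mathbb{Z}/2,N)$ (with $\mathbb{Z}/2$ acting by conjugation by $\sigma_1$). But this cohomology set is uniformly bounded over all finite subgroups $N\leq\mathrm{PGL}_3(\mathbb{C})$: for $N$ abelian one computes $|H^1(\mathbb{Z}/2,N)|\leq |N[2]|\leq 4$, and the classification of finite subgroups of $\mathrm{PGL}_3(\mathbb{C})$ (intransitive, monomial, or one of finitely many primitive groups) yields a universal constant in general. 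So your method saturates at a fixed small number of inequivalent real forms no matter how cleverly $P$ is assembled; it cannot reach arbitrary $r$. This boundedness is, in essence, why the question was considered hard for rational surfaces.

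The paper escapes exactly this trap by using real structures that do \emph{not} descend to $\mathbb{P}^2$: the cocycles are lifts of degree-$3$ Cremona involutions centred at points $p_{10},\dots,p_{r0}$ of a cubic curve $C$, which become biregular only after blowing up their base points, so the composites $\sigma_i\rho$ are anti-regular involutions of $X$ with no linear model downstairs. Correspondingly, the topology of the real locus cannot separate them; the inequivalence is instead proved arithmetically, by showing that an $\alpha\in\operatorname{Aut}(X)$ with $\alpha\sigma_j=\sigma_i\alpha$ would have to act on $\operatorname{Pic}(X)$ with coefficients satisfying $2m_{21}-4e^{20}_{21}+1=0$, a parity contradiction forced by the independence of the centres in $\operatorname{Pic}^0(C)$ and the restriction of automorphisms of $X$ to translations of $C$ by $2$-torsion. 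If you want to salvage your line of attack, you would have to produce non-linear cocycles, at which point the real-locus invariant must be replaced by something like the paper's lattice-theoretic argument.
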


\par Our proof of Theorem~\ref{MAIN THEOREM} hinges on the following construction: consider a real smooth cubic curve $C \subset \mathbb{P}^2_{\mathbb{C}}$ whose real locus $C(\mathbb{R})$ has two connected components in the Euclidean topology and for which the group of group automorphisms satisfies $\easy{Aut}_{\easy{gp}}(C) \cong \mathbb{Z}/2\mathbb{Z}$. If we fix an integer $r\geq3$ and choose points $p_{10}, \ldots, p_{r0} \in C(\mathbb{R})$ satisfying some suitable condition, then we can find exactly four points $p_{i1}, \ldots, p_{i4} \in C(\mathbb{R})$ for each $1 \leq i \leq r$ such that the tangent to $C$ at a point $p_{ij}$ with $j \neq 0$ cuts $C$ in $p_{i0}$. This situation allows us to define of cubic involutions $\sigma_1, \ldots, \sigma_r$ on the blow-up $X$ of $\mathbb{P}^2_{\mathbb{C}}$ in the $5r$ points $p_{ij}$. For each $i$, the automorphism $\sigma_i$ is the unique involution that fixes pointwise the strict transform of $C$ and preserves the strict transform of a general line through $p_{i0}$. These cubic involutions will be shown to induce $r$ nonisomorphic real forms on $X$, compare with Theorem~\ref{thm: main theorem rephrased explicitly}. The assumption that $r\geq 3$ is due to some computations in the proof of Theorem~\ref{thm: a=1, b=0 ... almost} (see Equations~\eqref{eq: for introduction, 1} and \eqref{eq: for introduction, 2}); the additional, tedious calculations for $r=2$ were left out but with some extra work, one could even show that for $r=2$, there are precisely $2$ real forms. In fact, we believe that all of the surfaces constructed in Theorem~\ref{thm: main theorem rephrased explicitly} have precisely $r$ forms, though we do not know how much more effort would be required to prove it.

\par After this paper appeared, Dinh, Oguiso and Yu \cite{dinh2021smooth} answered the Question~\ref{question} in the affirmative, and the author herself \cite{bot2021smooth} succeeded in constructing a smooth \textit{affine} rational surface defined over $\mathbb{C}$ with uncountably many nonisomorphic real forms. 

\par The structure of this article is the following: In Section~\ref{section: real structure}, we discuss the connection between real forms, real structures and automorphisms on $X$. We then introduce the cubic involutions in Section~\ref{section: Cubic involutions}, which will be the candidates for the inequivalent cocycles. We obtain desirable relations in the Picard group of $C$ in Section~\ref{section: independent points}, for which we need to assume certain points fulfilling an extra condition; with this condition, we can then start the translation of geometric properties of an automorphism of $X$ into arithmetic ones with the help of Section~\ref{section: condition on automorphisms of X}. As mentioned, along the way, we will have encountered a few conditions on the points chosen, and therefore, we secure their existence in Section~\ref{section: existence of suitable points}. In Sections~\ref{section: map from Pic(X) to Pic(C)}, \ref{section: description of induced automorphism in Pic(C)} and \ref{section: equations on the coefficients}, we will use the tools collected so far to complete the transition to arithmetic conditions on automorphisms of $X$. This allows us, finally, in Section~\ref{section: at least r real forms} to show Theorem~\ref{thm: main theorem rephrased explicitly}, which describes the construction for the smooth complex projective rational surface of Theorem~\ref{MAIN THEOREM}.
\par We fix $\mathbb{C}$ as the base field, and write $\mathbb{P}^2$ instead of $\mathbb{P}^2_{\mathbb{C}}$.
\par ~
\par \textbf{Acknowledgements.} I am very thankful to my PhD advisor J\'er\'emy Blanc for suggesting this topic and for the many discussions on it. Also, I would like to thank Fr\'ed\'eric Mangolte for reading and suggesting improvements for this text. Lastly, my thanks go to the referee for reading the paper very carefully and pointing out interesting details and subtleties.

\section{Real structures} \label{section: real structure}

\par The \textit{complexification} of a real algebraic variety $X_0$ is given by
	\[ (X_0)_{\mathbb{C}} \coloneqq X_0 \times_{\easy{Spec}\mathbb{R}} \easy{Spec}\mathbb{C}.\]
Fix a complex algebraic variety $X$. A \textit{real form} of $X$ is a real algebraic variety $X_0$ with a $\mathbb{C}$-isomorphism $(X_0)_{\mathbb{C}} \overset{\sim}{\rightarrow} X$. To understand the isomorphism classes of real forms of $X$, we instead study the real structures on $X$.
\par A \textit{real structure} on $X$ is an anti-regular involution $\rho : X \rightarrow X$, where the anti-regularity means that the following diagram commutes:
\begin{center}
	\begin{tikzcd}
		X \arrow[r, "\rho"] \arrow[d]  &  X \arrow[d] \\
		\easy{Spec}\mathbb{C} \arrow[r, "z \mapsto \overline{z}"]  & \easy{Spec}\mathbb{C} .
	\end{tikzcd}
\end{center}
Two real structures $\rho$ and $\rho'$ on the same complex variety $X$ are \textit{equivalent} if there exists $\varphi \in \easy{Aut}(X)$ such that $\rho=\varphi \rho' \varphi^{-1}$.

It is an exercise to show (see \cite[Prop.~1.1]{MR1954070} or \cite[Thm.~4.1]{benzerga}) that, if $n$ is even, there is, up to equivalence, only one real structure on $\mathbb{P}^n$, namely
\[[\, z_0 : \ldots : z_n \,] \mapsto [\, \overline{z_0} : \ldots : \overline{z_n} \,].\]
If, however, $n=2k+1$ is odd, there are two, up to equivalence:
\begin{align*}
	[\, z_0 : \ldots : z_n \,] &\mapsto [\, \overline{z_0} : \ldots : \overline{z_n} \,], \\
	[\, z_0 : z_1 : \ldots : z_n \,] &\mapsto [\, -\overline{z_1} : \overline{z_0}: \ldots : -\overline{z_{2k+1}} : \overline{z_{2k}} \,].
\end{align*}
Therefore, any real structure on $\mathbb{P}^2$ is equivalent to
\begin{equation} \label{eq: real structure on P^2}
 \widehat{\rho} : [\,z_0:z_1:z_2\,] \mapsto [\,\overline{z_0}:\overline{z_1}:\overline{z_2}\,].
\end{equation}

\par We can associate a real structure to a real form, and vice versa: considering a real form $X_0$ with complex isomorphism $\varphi: (X_0)_{\mathbb{C}}\overset{\sim}{\rightarrow}X$, we can set $\rho \coloneqq \varphi^{-1}\rho_0\varphi$ for the real structure $\rho_0 \coloneqq \easy{id} \times \easy{Spec}(z \mapsto \overline{z})$ on $(X_0)_{\mathbb{C}}$. Conversely, given a real structure $\rho$ on a complex variety $X$, the variety $X_0 \coloneqq X/\langle \rho \rangle$ is a real form of $X$.

\par As a matter of fact, there is an equivalence between the category of complex quasi-projective algebraic varieties with a real structure and the category of real quasi-projective algebraic varieties (see for example \cite{benzerga}, Chapter $3.1$, for more details).  Thus, knowing all real forms of a given complex variety is the same as knowing all real structures on it:

\begin{theorem}[{\cite[Thm.~$3.17$]{benzerga}}]\label{thm: real forms isomorphic if and only if real structures equivalent}
	Any two real forms of a complex quasi-projective variety $X$ are $\mathbb{R}$-isomorphic if and only if their associated real structures are equivalent.
\end{theorem}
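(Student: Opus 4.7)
The plan is to unpack the equivalence of categories recalled just before the theorem. I would fix two real forms $X_0, X_0'$ of $X$ together with $\mathbb{C}$-isomorphisms $\varphi : (X_0)_{\mathbb{C}} \overset{\sim}{\rightarrow} X$ and $\varphi' : (X_0')_{\mathbb{C}} \overset{\sim}{\rightarrow} X$, and let $\rho_0, \rho_0'$ denote the canonical real structures on $(X_0)_{\mathbb{C}}$ and $(X_0')_{\mathbb{C}}$. The associated real structures $\rho, \rho'$ on $X$ are characterised by the intertwining relations $\varphi \circ \rho_0 = \rho \circ \varphi$ and $\varphi' \circ \rho_0' = \rho' \circ \varphi'$. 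The goal is to check that an $\mathbb{R}$-isomorphism $X_0 \overset{\sim}{\rightarrow} X_0'$ exists if and only if some $\alpha \in \easy{Aut}(X)$ conjugates $\rho$ to $\rho'$.

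\textbf{Forward direction.} Given an $\mathbb{R}$-isomorphism $\psi : X_0 \overset{\sim}{\rightarrow} X_0'$, I would set $\alpha := \varphi' \circ \psi_{\mathbb{C}} \circ \varphi^{-1}$, a complex automorphism of $X$. Since $\psi$ is defined over $\mathbb{R}$, its base change $\psi_{\mathbb{C}}$ intertwines the canonical real structures, i.e.\ $\psi_{\mathbb{C}} \circ \rho_0 = \rho_0' \circ \psi_{\mathbb{C}}$. Substituting the characterisations of $\rho$ and $\rho'$ into $\alpha \circ \rho \circ \alpha^{-1}$ and cancelling the $\varphi$'s and $\varphi'$'s then collapses the expression to $\rho'$, yielding the equivalence of $\rho$ and $\rho'$.

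\textbf{Backward direction and the main obstacle.} Conversely, starting from $\alpha \in \easy{Aut}(X)$ with $\alpha \circ \rho \circ \alpha^{-1} = \rho'$, I would form $\beta := \varphi'^{-1} \circ \alpha \circ \varphi : (X_0)_{\mathbb{C}} \rightarrow (X_0')_{\mathbb{C}}$ and verify, by the mirror calculation, that $\beta \circ \rho_0 = \rho_0' \circ \beta$. What remains — and this is the only non-formal step — is to show that such a $\rho_0$-equivariant $\mathbb{C}$-isomorphism descends to an $\mathbb{R}$-isomorphism $\psi : X_0 \overset{\sim}{\rightarrow} X_0'$ whose base change is $\beta$. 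On affine opens this reduces to the classical fact that the $\easy{Gal}(\mathbb{C}/\mathbb{R})$-invariants of the coordinate ring of $(X_0)_{\mathbb{C}}$ give back the coordinate ring of $X_0$, and the local descents glue because $\beta$ is a morphism of schemes; quasi-projectivity enters only to ensure one can cover $X_0, X_0'$ by such affine opens. Alternatively, one may cite directly the equivalence of categories mentioned just above the theorem, which packages exactly this descent. The resulting $\psi$ is automatically an isomorphism because its complexification $\beta$ is.
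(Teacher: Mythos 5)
Your argument is correct: both directions are the standard conjugation computation, and the only non-formal step, descending the $\rho_0$-equivariant isomorphism $\beta$ to an $\mathbb{R}$-isomorphism, is exactly the Galois descent packaged by the equivalence of categories recalled just before the theorem. The paper itself gives no proof here --- it simply cites \cite[Thm.~3.17]{benzerga} --- so there is nothing to compare against beyond noting that your proof is the standard one that reference supplies. One cosmetic point: with $\varphi:(X_0)_{\mathbb{C}}\overset{\sim}{\rightarrow}X$, your convention $\rho=\varphi\rho_0\varphi^{-1}$ is the one that typechecks (the paper's displayed formula $\varphi^{-1}\rho_0\varphi$ reads as the opposite composition order), so no issue there.
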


\par With this theorem under our belt, we turn our attention to real structures. Fix a real structure $\rho$ on a complex variety $X$ and set $G \coloneqq \langle \rho \rangle$. Let $\rho'$ be another real structure on $X$. Then:
\[ \rho'  \rho \; \easy{\reflectbox{$\coloneqq$}} \; a_{\rho}\in \easy{Aut}(X).\]
Conversely, we can analyse the conditions on an automorphism of $X$ such that its composition with $\rho$ is again a real structure.

\begin{definition} \label{def: cocycle}
	An automorphism $a_{\rho}$ of $X$ is called a \textit{cocycle} if it verifies $(a_{\rho}\rho)^2 = \easy{id}_X$.
	Two cocycles $a_{\rho}, b_{\rho}$ are called \textit{equivalent} if there exists $\alpha \in \easy{Aut}(X)$ such that $ b_{\rho}\rho = \alpha^{-1}a_{\rho}\rho \alpha$.
	In that case, we write $a_{\rho} \sim b_{\rho}$.
	We denote by $Z^1(G, \easy{Aut}(X))$ the set of cocycles and by
	\[ H^1(G, \easy{Aut}(X)) \coloneqq Z^1(G, \easy{Aut}(X))/ \sim\]
	the first Galois cohomology set.
\end{definition}
Using $H^1(G, \easy{Aut}(X))$, we may describe the equivalence classes of real structures, and therefore of real forms: 

\begin{theorem}[{\cite[Section $2.6$]{MR181643}}] \label{thm: equivalence classes of real strucutres, bijection}
	Let $X$ be a quasi-projective complex variety and $\rho$ a real structure on $X$. The equivalence classes of real structures are in bijection with the elements of $H^1(\easy{Gal}(\mathbb{C}/\mathbb{R}), \easy{Aut}(X))$, where the nontrivial element of $\easy{Gal}(\mathbb{C}/\mathbb{R})$ acts on $\easy{Aut}(X)$ by conjugation with $\rho$.
\end{theorem}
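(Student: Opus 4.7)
The plan is to produce an explicit bijection by sending a real structure $\rho'$ to the automorphism $\rho'\rho \in \easy{Aut}(X)$, and to check that this map descends to a bijection between equivalence classes of real structures and elements of $H^1(G,\easy{Aut}(X))$, where $G = \langle \rho \rangle \cong \easy{Gal}(\mathbb{C}/\mathbb{R})$ acts on $\easy{Aut}(X)$ by $\alpha \mapsto \rho\alpha\rho^{-1}$. First I would verify that this action is well-defined: since the composition of two anti-regular morphisms is regular, $\rho\alpha\rho^{-1}$ indeed lies in $\easy{Aut}(X)$ whenever $\alpha$ does.

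Next I would check that $\rho'\rho$ is always a cocycle in the sense of Definition~\ref{def: cocycle}. Setting $a_{\rho'} \coloneqq \rho'\rho$, I compute $a_{\rho'}\rho = \rho'\rho\rho = \rho'$, so $(a_{\rho'}\rho)^2 = (\rho')^2 = \easy{id}_X$. Conversely, given a cocycle $a \in Z^1(G,\easy{Aut}(X))$, the composition $a\rho$ is anti-regular (as the composition of a regular map with an anti-regular one) and squares to the identity by the cocycle condition; hence $a\rho$ is a real structure on $X$ whose image under $\rho' \mapsto \rho'\rho$ is $a\rho\cdot\rho = a$. This establishes both that the map lands in $Z^1(G,\easy{Aut}(X))$ and that it is surjective onto it.

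Finally I would show that this set-theoretic bijection $\{\text{real structures on } X\} \leftrightarrow Z^1(G,\easy{Aut}(X))$ is compatible with the two equivalence relations. If $\rho_1' = \varphi\rho_2'\varphi^{-1}$ for some $\varphi \in \easy{Aut}(X)$, then writing $a_i = \rho_i'\rho$ one finds
\[ a_2\rho = \rho_2' = \varphi^{-1}\rho_1'\varphi = \varphi^{-1}a_1\rho\,\varphi, \]
which is precisely the relation $a_2 \sim a_1$ of Definition~\ref{def: cocycle} with $\alpha = \varphi$. Reversing this argument shows that equivalent cocycles come from equivalent real structures. Combining these steps gives the stated bijection with $H^1(G,\easy{Aut}(X))$.

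There is no real obstacle here; the proof is a direct bookkeeping exercise in non-abelian Galois cohomology specialised to $G = \mathbb{Z}/2\mathbb{Z}$. The only point that requires a modest amount of care is making sure the action of $\rho$ on $\easy{Aut}(X)$ is compatible with the two formulations of the equivalence relation (on real structures and on cocycles), which is what the computation in the third paragraph checks.
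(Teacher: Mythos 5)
Your argument is correct and complete. The paper itself does not prove this statement --- it is cited from Borel--Serre --- but the discussion immediately preceding Definition~\ref{def: cocycle} sets up exactly the correspondence $\rho' \mapsto \rho'\rho$ that you use, and your verification that this bijection between real structures and $Z^1(G,\easy{Aut}(X))$ intertwines the two equivalence relations (equivalence of real structures on one side, the coboundary relation $b_\rho\rho=\alpha^{-1}a_\rho\rho\,\alpha$ on the other) is the standard, purely formal argument; no geometric input beyond the existence of the base real structure $\rho$ is needed.
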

In the context of a blow-up, we would like to preserve the real structure on a complex projective surface:

\begin{proposition}[{\cite[\RomanNumeralCaps{2}.6.1]{MR1015720}}] \label{prop: blow-up and real structure}
	Let $Y$ be a complex projective surface and $\widehat{\rho}$ a real structure on $Y$. The blow-up $\pi : X \rightarrow Y$ in a real point of $Y$ or in a pair of complex conjugated points of $Y$ allows one to give $X$ a real structure $\rho$ in a natural way such that $\pi$ is real, meaning $\pi \rho = \widehat{\rho} \pi$.
\end{proposition}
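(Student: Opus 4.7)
The plan is to construct $\rho$ via the universal property of the blow-up, after recasting the anti-regular $\widehat\rho$ as a regular morphism to a conjugate variety. Write $\overline{Y}$ (respectively $\overline{X}$) for the complex variety obtained from $Y$ (respectively $X$) by replacing the $\mathbb{C}$-structure by its conjugate; then anti-regular maps $A\to B$ of complex varieties correspond bijectively to regular maps $A\to \overline{B}$, and in particular $\widehat\rho$ becomes a regular isomorphism $\widetilde\rho\colon Y\to\overline{Y}$ whose square, under the identification $\overline{\overline{Y}}=Y$, is $\easy{id}_Y$. Moreover, $\overline{X}\to\overline{Y}$ is canonically the blow-up of $\overline{Y}$ at the conjugate of the blown-up subscheme $Z\subset Y$.

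First I would treat the case of a real point $p$, so that $\widehat\rho(p)=p$ and thus $\widetilde\rho(p)=p$ in $\overline{Y}$, which makes $\overline\pi\colon\overline{X}\to\overline{Y}$ the blow-up at this same point. The composition $\widetilde\rho\circ\pi\colon X\to\overline{Y}$ is a regular morphism whose scheme-theoretic preimage of $p$ equals the Cartier divisor $E=\pi^{-1}(p)$, so by the universal property of the blow-up it lifts uniquely to a regular morphism $\widetilde\rho_X\colon X\to\overline{X}$ with $\overline\pi\circ\widetilde\rho_X=\widetilde\rho\circ\pi$. Translating back, $\widetilde\rho_X$ is an anti-regular $\rho\colon X\to X$ satisfying $\pi\rho=\widehat\rho\pi$.

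The case of a pair $\{p,q\}$ with $q=\widehat\rho(p)$ is entirely parallel: setting $Z=\{p,q\}$, the map $\widetilde\rho$ sends $Z$ to the subscheme of $\overline{Y}$ used to form $\overline{X}$, and the preimage of $Z$ under $\widetilde\rho\circ\pi\colon X\to\overline{Y}$ is the Cartier divisor $E=E_p\sqcup E_q$. The universal property again produces a unique regular lift $X\to\overline{X}$, corresponding to an anti-regular $\rho\colon X\to X$ with $\pi\rho=\widehat\rho\pi$ which swaps $E_p$ and $E_q$.

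Finally, to see that $\rho$ is an involution and hence a genuine real structure, I would invoke uniqueness in the universal property once more: since $\widehat\rho^2=\easy{id}_Y$, both $\rho^2$ and $\easy{id}_X$ lift $\pi$ along $\pi$, so they coincide. The only (mild) obstacle is the bookkeeping needed to translate between the anti-regular picture on $Y$ and the regular picture on $\overline{Y}$ so that the universal property of the blow-up applies cleanly; once that translation is in place every step is a formal consequence.
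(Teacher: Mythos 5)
The paper does not prove this proposition at all: it is quoted directly from Silhol \cite[II.6.1]{MR1015720}, so there is no in-paper argument to compare against. Your proof is correct and self-contained. Recasting the anti-regular map $\widehat\rho$ as a regular isomorphism $\widetilde\rho\colon Y\to\overline{Y}$, observing that $\overline{X}\to\overline{Y}$ is the blow-up at the conjugate centre, and checking that the scheme-theoretic preimage of that centre under $\widetilde\rho\circ\pi$ is exactly the exceptional Cartier divisor (this is precisely where the hypothesis that the centre is a real point or a conjugate pair enters, since otherwise $\widetilde\rho^{-1}(\overline{Z})\neq Z$ and the lift would not exist) gives the lift by the universal property, and the uniqueness clause of that same universal property cleanly yields $\rho^2=\operatorname{id}_X$. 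The one place worth a half-sentence more of care is the assertion that $\overline{X}\to\overline{Y}$ \emph{is} the blow-up at $\overline{Z}$; this holds because conjugation is base change along an isomorphism of $\operatorname{Spec}\mathbb{C}$ and blow-ups commute with flat base change. For comparison, the argument in Silhol's book is phrased via Galois descent: one blows up the real form $Y_0=Y/\langle\widehat\rho\rangle$ at the corresponding closed point (residue field $\mathbb{R}$ for a real point, $\mathbb{C}$ for a conjugate pair) and complexifies, again using compatibility of blow-up with base change $\operatorname{Spec}\mathbb{C}\to\operatorname{Spec}\mathbb{R}$. The two routes are essentially equivalent; yours has the advantage of never leaving the complex category, while the descent version makes the resulting real form of $X$ explicit.
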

So if we blow-up $\mathbb{P}^2$ in real points or in pairs of conjugates, the real structure $\widehat{\rho}$ given in \eqref{eq: real structure on P^2} lifts to a real structure $\rho$ on the blow-up $X$. Therefore, by Theorem~\ref{thm: equivalence classes of real strucutres, bijection}, the equivalence classes of real structures on $X$ --- and hence the isomorphism classes of real forms of $X$ --- are in a one-to-one correspondence with the equivalence classes of cocycles of $X$ with respect to the real structure $\rho$.

\section{Cubic involutions} \label{section: Cubic involutions}

\par Theorem~\ref{thm: equivalence classes of real strucutres, bijection} replaces our search for a rational surface with $r$ real structures by finding one having $r$ inequivalent cocycles. In fact, we aim to construct a blow-up $X$ of $\mathbb{P}^2$ at real points having $r \geq 3$ automorphisms $\sigma_1, \ldots, \sigma_r$ such that
\begin{enumerate}[leftmargin=*]
	\item for all $1 \leq i \leq r$, the equation $(\sigma_i\rho)^2=\easy{id}_X$ holds, where $\rho$ is the standard real structure inherited from $\mathbb{P}^2$ (see Proposition~\ref{prop: blow-up and real structure}), \label{item: first item, cocycle}
	\item and, provided~\ref{item: first item, cocycle} holds, for $i \neq j$, the cocycles $\sigma_i$ and $\sigma_j$ are not equivalent, meaning that there does not exist any automorphism $\alpha \in \easy{Aut}(X)$ such that $\alpha\sigma_i\rho = \sigma_j \rho \alpha$.
\end{enumerate}

\par The construction we propose to achieve this relies on the following classical birational map (see \cite[Ex.\,$3$]{MR1282018}):
\begin{definition}
	Let $C \subset \mathbb{P}^2$ be a smooth cubic curve. For $p \in C$, denote by $\widehat{\sigma}_p : \mathbb{P}^2 \dashrightarrow \mathbb{P}^2$ the \textit{cubic involution centred at $p$}, which is the unique birational map defined in the following way: if $L$ is a general line containing $p$, then
	\begin{enumerate}[leftmargin=*]
		\item the map $\widehat{\sigma}_p$ satisfies $\widehat{\sigma}_p(L)=L$,
		\item the restriction $\widehat{\sigma}_p|_L$ is the unique involution that fixes the two points of $(L\cap C)\setminus \{p\}$ pointwise.
	\end{enumerate}
\end{definition}
Note that $\widehat{\sigma}_p$ restricts to the identity on the curve $C$.
In the sequel, we will always assume that $p$ in the above definition is not an inflection point. Then, by \cite[{Prop.\,$12$}]{MR2492397}, the base points of $\widehat{\sigma}_p$ are $p$ and those points $q$ where the tangent to $C$ at $q$ cuts $C$ in $p$; and there are precisely four such points. Denote them by $p_1, \ldots, p_4$ and call them \textit{associated with $p$}. 
\par Recall that for a fixed inflection point $p_0 \in C$ as neutral element we get a group structure on $C$. Furthermore, we have a group isomorphism
\begin{align*}
	C &\cong \easy{Pic}^0(C) \leq  \easy{Pic}(C), \\
	p &\mapsto p-p_0.
\end{align*}
In $\easy{Pic}(C)$, we write $p$ for the class of $p$. The condition that the tangent to the cubic curve at $p_i$ cuts it in $p$ can be expressed in $\easy{Pic}(C)$ by $p+2p_i=3p_0$. Since $p+2p_i=p+2p_j$ for $1 \leq i,j \leq 4$, we can deduce 
\[2(p_i-p_j) =0,\]
and hence $p_i-p_j \in C[2]$, where $C[2]$ denotes the group of $2$-torsion elements of $\easy{Pic}^0(C)$.
\par In for example \cite[Cor.~\RomanNumeralCaps{3}.6.4]{silverman2009arithmetic}, it is shown that $C[2] \cong ( \mathbb{Z} / 2 \mathbb{Z} )^2$, which we can use in the proof of the following lemma.

\begin{lemma} \label{lemma: 2 torsion and our points}
	Fix $p$ on a smooth cubic curve $C \subset \mathbb{P}^2$ which is not an inflection point, and call its associated points $p_1, \ldots, p_4$. Then, for every $1\leq i,j \leq 4$,
	\begin{align}
		p_{i}-p_{j}=p_{j}-p_{i}, \label{eq: permutation of indices}
	\end{align}
	and if $\{i,j,k,\ell \}=\{1,2,3,4\}$, then
	\[p_{i}-p_{j}=p_{k}-p_{\ell}.\]
	Moreover, $C[2]=\{ 0, p_{1}-p_{4},\, p_{2}-p_{4},\, p_{3}-p_{4} \}$.
\end{lemma}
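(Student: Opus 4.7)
The plan is to translate the hypotheses into the Picard group of $C$ and then exploit the structure of $C[2] \cong (\mathbb{Z}/2\mathbb{Z})^2$, in which the sum of any two distinct nonzero elements is the third.

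First I would record, for each $i \in \{1,2,3,4\}$, the identity $2p_i + p = 3p_0$ in $\operatorname{Pic}(C)$: the tangent to $C$ at $p_i$ meets $C$ in the divisor $2p_i + p$, while the tangent at the flex $p_0$ cuts out $3p_0$, and both are intersections of a line with $C$, hence linearly equivalent. Comparing two such relations gives $2(p_i - p_j) = 0$, so $p_i - p_j \in C[2]$. Since any $2$-torsion element is self-inverse, $p_j - p_i = -(p_i - p_j) = p_i - p_j$, which is precisely \eqref{eq: permutation of indices}.

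Next I would verify that the four classes $0,\, p_1 - p_4,\, p_2 - p_4,\, p_3 - p_4$ are pairwise distinct elements of $C[2]$. This uses the fact that $p_1, \ldots, p_4$ are four distinct points (recalled just before the lemma via \cite[Prop.\,12]{MR2492397}), together with the injectivity of the Abel--Jacobi map $q \mapsto q - p_0$ on the smooth genus-one curve $C$: the difference $p_i - p_j$ vanishes in $\operatorname{Pic}^0(C)$ only when $p_i = p_j$. Since $|C[2]| = 4$, these four elements must exhaust $C[2]$, which gives the last assertion.

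Finally, for the identity $p_i - p_j = p_k - p_\ell$ when $\{i,j,k,\ell\} = \{1,2,3,4\}$, I would expand both sides through $p_4$: using \eqref{eq: permutation of indices},
\[ p_i - p_j = (p_i - p_4) + (p_j - p_4) \quad\text{and}\quad p_k - p_\ell = (p_k - p_4) + (p_\ell - p_4). \]
Because $\{i,j\}$ and $\{k,\ell\}$ partition $\{1,2,3,4\}$, the $(\mathbb{Z}/2\mathbb{Z})^2$-sum on each side yields the element of $C[2]$ indexed by the remaining nontrivial coset (with $0$ automatically absorbed when $4$ lies in one of the index pairs), so both sides coincide. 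The only real obstacle is the light bookkeeping of which sum equals which coset; no further input beyond the group law of $(\mathbb{Z}/2\mathbb{Z})^2$ is needed.
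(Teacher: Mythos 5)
Your proof is correct. The first identity and the final description of $C[2]$ are obtained exactly as in the paper (the relation $p+2p_i=3p_0$, hence $2(p_i-p_j)=0$, is recorded in the text just before the lemma), but you handle the middle identity $p_i-p_j=p_k-p_\ell$ by a genuinely different route and in the opposite order. The paper proves that identity first, by pigeonhole: the six differences $p_i-p_j$ over unordered pairs are nonzero elements of $C[2]\cong(\mathbb{Z}/2\mathbb{Z})^2$, which has only three nonzero elements, and any coincidence between two differences sharing an index would force two of the $p_i$ to agree; so the forced coincidences must pair up complementary index sets, and the description of $C[2]$ then drops out. You instead pin down $C[2]=\{0,\,p_1-p_4,\,p_2-p_4,\,p_3-p_4\}$ first (four distinct classes in a four-element group, using that distinct points on a genus-one curve are not linearly equivalent) and then deduce the middle identity by expanding both sides through $p_4$ and using that in $(\mathbb{Z}/2\mathbb{Z})^2$ the sum of two distinct nonzero elements is the third (equivalently, all three nonzero elements sum to zero). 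Both arguments are equally short; yours makes explicit which difference equals which element of $C[2]$ at the cost of the final bit of coset bookkeeping, while the paper's counting argument sidesteps that bookkeeping entirely.
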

\begin{proof}
	The first equality follows from $2(p_{i}-p_{j})=0$. To see that the second equality holds, note that we have --- thanks to the first equality --- six nonzero elements in total in $C[2]$:
	\[p_{1}-p_{2}, \, p_{1}-p_{3},\, p_{1}-p_{4},\, p_{2}-p_{3}, \, p_{2}-p_{4},\, p_{3}-p_{4}.\]
	But since $C[2] \cong (\mathbb{Z}/2\mathbb{Z})^2$ by \cite[Cor.~\RomanNumeralCaps{3}.6.4]{silverman2009arithmetic}, the $2$-torsion group $C[2]$ only contains three nonzero elements. If we equate two elements where an index agrees, then, possibly using~\eqref{eq: permutation of indices}, we find that two of the associated $p_{i}$ would have to agree. This is a contradiction to the points $p_{i}$ being distinct.
	\par As we have seen, the nontrivial elements of $C[2]$ can be given as $p_{1}-p_{4}$, $p_{2}-p_{4}$ and $p_{3}-p_{4}$, which concludes the proof.
\end{proof} 

\par We can now blow up the points $p$, $p_1, \ldots, p_4$ and examine what happens to $\widehat{\sigma}_p$.

\begin{proposition}\label{proposition: real points, cubic involutions commute with anti-regular involution}
	Let $C \subset \mathbb{P}^2$ be a real smooth cubic curve and suppose there exists a real point $p \in C$ which is not an inflection point. Consider the cubic involution $\widehat{\sigma}_p$ centred at $p$ and the rational surface $X$ obtained by blowing up $p, p_1, \ldots, p_4$. Then the birational map $\widehat{\sigma}_p$ lifts to an involution $\sigma_p \in \easy{Aut}(X)$ which commutes with the lift $\rho$ of the anti-regular involution $\widehat{\rho}: [\,x:y:z\,] \mapsto [\,\overline{x}:\overline{y}: \overline{z}\,]$ of the projective space. Furthermore, this implies $(\sigma_p\rho)^2=\easy{id}_X$, meaning $\sigma_p$ is a cocycle.
\end{proposition}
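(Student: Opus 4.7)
The plan is to lift $\widehat{\sigma}_p$ to a biregular involution $\sigma_p$ on $X$, then use a uniqueness argument to show that $\widehat{\sigma}_p$ already commutes with $\widehat{\rho}$ as birational self-maps of $\mathbb{P}^2$, and finally deduce the cocycle relation by lifting this commutativity to $X$.

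First, since $p$ is not an inflection point, the result \cite[Prop.~12]{MR2492397} quoted above identifies the base locus of $\widehat{\sigma}_p$ as exactly $\{p, p_1, \ldots, p_4\}$. As $\widehat{\sigma}_p$ is its own inverse, the same holds for $\widehat{\sigma}_p^{-1}$. Hence blowing up these five points resolves the indeterminacies of both $\widehat{\sigma}_p$ and its inverse simultaneously, and the standard argument yields a biregular involution $\sigma_p \in \easy{Aut}(X)$ lifting $\widehat{\sigma}_p$.

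Next, set $\tau \coloneqq \widehat{\rho}\,\widehat{\sigma}_p\,\widehat{\rho}^{-1}$. Since $p$ is real, $\widehat{\rho}$ sends every line $L$ through $p$ to another line through $p$; combined with $\widehat{\sigma}_p(L) = L$ for all such $L$, this gives $\tau(L) = L$. Because $C$ is real and $\widehat{\sigma}_p|_C = \easy{id}_C$, we have $\tau|_C = \easy{id}_C$. Restricted to a general line $L$ through $p$, $\tau|_L$ is therefore an involution of $L \cong \mathbb{P}^1$ fixing the two points of $(L\cap C)\setminus\{p\}$; by the uniqueness characterization of $\widehat{\sigma}_p$, one concludes $\tau|_L = \widehat{\sigma}_p|_L$, and thus $\tau = \widehat{\sigma}_p$. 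In particular $\widehat{\rho}$ permutes the set $\{p_1,\ldots, p_4\}$, so the centres of the blow-up $\pi \colon X \to \mathbb{P}^2$ consist of real points and pairs of complex conjugates, and Proposition~\ref{prop: blow-up and real structure} yields a real structure $\rho$ on $X$ lifting $\widehat{\rho}$.

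Finally, the two regular morphisms $\sigma_p\rho$ and $\rho\sigma_p$ on $X$ both descend under $\pi$ to the same birational map $\widehat{\sigma}_p\widehat{\rho} = \widehat{\rho}\widehat{\sigma}_p$ on $\mathbb{P}^2$; since $\pi$ is birational and both lifts are regular, they must coincide, so $\sigma_p\rho = \rho\sigma_p$ and then $(\sigma_p\rho)^2 = \sigma_p^2\rho^2 = \easy{id}_X$. The main obstacle I expect is the first step, namely being sure that blowing up precisely these five reduced points (rather than a tower that also resolves infinitely near base points) suffices to turn $\widehat{\sigma}_p$ into a morphism; this relies essentially on the structural result \cite[Prop.~12]{MR2492397} about cubic involutions, after which the rest of the argument is formal.
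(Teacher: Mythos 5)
Your proposal is correct and follows the same skeleton as the paper's proof: resolve the base locus $\{p,p_1,\ldots,p_4\}$ to get the biregular lift $\sigma_p$, establish that $\widehat{\sigma}_p$ is real, and conclude that $\sigma_p$ commutes with $\rho$. The one substantive difference is in the middle step: the paper simply cites \cite[Ex.\,3]{MR1282018} for the fact that $\widehat{\sigma}_p$ is defined over $\mathbb{R}$ whenever $C$ and $p$ are, whereas you derive $\widehat{\rho}\,\widehat{\sigma}_p\,\widehat{\rho}^{-1}=\widehat{\sigma}_p$ directly from the uniqueness characterization of the cubic involution (preservation of lines through the real point $p$, identity on the real curve $C$, and the fixed points on a general line). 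This makes your argument self-contained; the only point worth making explicit is that on a general line $L$ the conjugate $\tau|_L$ is a \emph{nontrivial} involution (otherwise $\tau$, hence $\widehat{\sigma}_p$, would be the identity), so that the uniqueness clause in the definition applies. Your observation that $\widehat{\rho}$ permutes $\{p_1,\ldots,p_4\}$, so that Proposition~\ref{prop: blow-up and real structure} applies to produce $\rho$ on $X$, is also a detail the paper leaves implicit.
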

\begin{proof}
	Because $(\widehat{\sigma}_p)^2=\easy{id}_X$, and since $p$, $p_1, \ldots, p_4$ are the base points of $\widehat{\sigma}_p$, the birational map $\widehat{\sigma}_p$ lifts to an automorphism on the blow-up of the base points $p$, $p_1, \ldots, p_4$. Denote this lift by $\sigma_p \in \easy{Aut}(X)$. 
	\par By \cite[Ex.\,$3$]{MR1282018}, the birational map $\widehat{\sigma}_p$ is defined over $\mathbb{R}$ so long as $C$ is defined over $\mathbb{R}$ and $p$ is real. Therefore, on the blow-up in $p$, $p_1, \ldots, p_4$, the involutions $\sigma_p$ and $\rho$ commute.
\end{proof}

\par Suppose we can choose real points $p_{10}, \ldots, p_{r0}$ with $r\geq 3$ on a real smooth cubic curve $C$ such that none of them is an inflection point or associated points of one another. There will be further assumption on the points, which will be introduced in Section~\ref{section: independent points} and Section~\ref{section: condition on automorphisms of X} --- the existence of such points will be shown in Section~\ref{section: existence of suitable points}. Denote the associated points of $p_{i0}$ by $p_{i1}, \ldots, p_{i4}$. 
\par Under these assumptions, we can blow up all the $p_{ij}$'s to obtain a rational surface $X$, and for every $1\leq i \leq r$, the cubic involution $\widehat{\sigma}_{p_{i0}}$ lifts to an automorphism $\sigma_{p_{i0}} \; \reflectbox{$\coloneqq$}\; \sigma_i$. These automorphisms $\sigma_1, \ldots, \sigma_r \in \easy{Aut}(X)$ are the candidates for the inequivalent cocycles. Proposition~\ref{proposition: real points, cubic involutions commute with anti-regular involution} shows that they are indeed cocycles. 

\par The rest of the paper is devoted to proving that they are also inequivalent, which shows that there are at least $r$ real structures. In Section~\ref{section: condition on automorphisms of X}, we will express geometric properties of automorphisms of $X$ arithmetically, a translation needed to prove that the $\sigma_i$ are not equivalent.

\section{Independent points and relations in $\easy{Pic}(C)$} \label{section: independent points}

\par As mentioned in the previous chapter, we will discuss an important condition on the points $p_{10}, \ldots, p_{r0}$. For one, this condition will be indispensable for the translation of geometric properties into arithmetic ones, but it will also turn out to be stronger than the assumption of noncollinearity on the $p_{ij}$'s (see Corollary~\ref{cor: independent implies not collinear}):
\begin{definition} \label{def: independent}
	Let $C\subset \mathbb{P}^2$ a smooth cubic curve and $p_0 \in C$ an inflection point. Points $p_1, \ldots, p_s \in C$ are called \textit{independent} if in $\easy{Pic}^0(C)$, the elements $(p_1-p_0), \ldots, (p_s-p_0)$ are $\mathbb{Z}$-linearly independent.
\end{definition}
Note that independence implies that these points cannot be inflection points; in fact, $(p_i-p_0)$ is without torsion in $\easy{Pic}^0(C)$, or equivalently, $p_i$ is not torsion in $C$. We can show that the condition of independence on the $p_{10}, \ldots, p_{r0}$ is stronger than the condition of the existence four points out of all the $p_{ij}$'s, with no three collinear, for which we first show the following proposition:

\begin{proposition} \label{prop: all about relations on points}
	Let $r\geq 1$, fix independent points $p_{10}, \ldots, p_{r0}$ lying on a smooth cubic curve $C \subset \mathbb{P}^2$, choose an inflection point $p_0 \in C$ as neutral element and fix a labelling $\{0,\delta_1, \delta_2, \delta_3\}=C[2]\cong\left(\mathbb{Z}/2\mathbb{Z}\right)^2$. Then we can order the associated points $p_{i1}, \ldots, p_{i4}$ such that $\delta_k = p_{ik}- p_{i4}$ for $k=1,2,3$. Furthermore:
	\begin{enumerate}[leftmargin=*]
		\item We have the following relations in $\easy{Pic}(C)$:
		\begin{align*}
			3p_0 &= p_{10}+2p_{14},  \\
			p_{i0} &= p_{10}+2p_{14}-2p_{i4}, \\
			p_{i1} &= \delta_1+p_{i4},   \\
			p_{i2} &= \delta_2+p_{i4},  \\
			p_{i3} &= \delta_1+\delta_2+p_{i4},
		\end{align*}	
		for $1 \leq i \leq r$. \label{item: relations on points}
		\item If there exist $m, n_1, \ldots, n_r, s_1, s_2, d \in \Z$ such that 
		\begin{align*}
			mp_{10}+\underset{i=1}{\overset{r}{\sum}}n_ip_{i4}+s_1\delta_1+s_2\delta_2=3dp_0,
		\end{align*} 
		then $m=d$, $n_1=2d$, $n_2=\ldots=n_r=0$ and $s_1\equiv s_2\equiv 0 \, \easy{mod} 2$.
		\label{item: independence implies conditions}
	\end{enumerate}
\end{proposition}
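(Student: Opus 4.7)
The plan is to dispatch the ordering and part~(i) using the tangent-line condition and Lemma~\ref{lemma: 2 torsion and our points}, and to reduce part~(ii) to a pure independence question in $\easy{Pic}^0(C)$ by projecting and doubling to kill the $2$-torsion.

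For each $i$, Lemma~\ref{lemma: 2 torsion and our points} applied with $p=p_{i0}$ shows that, once any one of the four associated points is chosen as $p_{i4}$, the differences $p_{i1}-p_{i4}$, $p_{i2}-p_{i4}$, $p_{i3}-p_{i4}$ exhaust the three nonzero elements of $C[2]$. Permuting the labels $1,2,3$ we can arrange $p_{ik}-p_{i4}=\delta_k$ for $k=1,2,3$. Since $C[2]\cong(\mathbb{Z}/2\mathbb{Z})^2$ forces $\delta_1+\delta_2+\delta_3=0$, we get $p_{i3}=p_{i4}+\delta_1+\delta_2$. The tangent-line condition defining the associated points translates to $p_{i0}+2p_{ij}=3p_0$ in $\easy{Pic}(C)$ for every $j\in\{1,2,3,4\}$, since any line meets $C$ in a divisor linearly equivalent to the inflection tangent $3p_0$. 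Specialising $j=4$ and $i=1$ gives $3p_0=p_{10}+2p_{14}$, and comparing with the analogous equation for general $i$ yields $p_{i0}=p_{10}+2p_{14}-2p_{i4}$.

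For part~(ii), eliminating $p_{10}$ via $p_{10}=3p_0-2p_{14}$ rewrites the hypothesis as
\[
(n_1-2m)\,p_{14}+\sum_{i=2}^{r}n_i\,p_{i4}+s_1\delta_1+s_2\delta_2 \;=\; 3(d-m)\,p_0.
\]
Reading off degrees yields $(n_1-2m)+n_2+\cdots+n_r=3(d-m)$, so subtracting the appropriate multiple of $p_0$ turns this into an identity in $\easy{Pic}^0(C)$:
\[
(n_1-2m)(p_{14}-p_0)+\sum_{i=2}^{r}n_i\,(p_{i4}-p_0)+s_1\delta_1+s_2\delta_2 \;=\; 0.
\]
Doubling kills the $2$-torsion, and using $2(p_{i4}-p_0)=-(p_{i0}-p_0)$ (which follows from $3p_0=p_{i0}+2p_{i4}$) converts the remaining sum into a $\mathbb{Z}$-linear combination of the $p_{i0}-p_0$. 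Independence of $\{p_{i0}-p_0\}_{i=1}^{r}$ then forces $n_1=2m$ and $n_i=0$ for $i\ge 2$. Feeding these values back into the undoubled identity leaves $s_1\delta_1+s_2\delta_2=0$, and the $\mathbb{F}_2$-linear independence of $\delta_1,\delta_2$ inside $C[2]\cong(\mathbb{Z}/2\mathbb{Z})^2$ forces $s_1\equiv s_2\equiv 0\pmod{2}$. The degree equation finally reads $3(d-m)=0$, so $d=m$ and $n_1=2d$.

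The only delicate point is keeping careful track of degrees when moving between $\easy{Pic}(C)$ and $\easy{Pic}^0(C)$; once the equation sits in $\easy{Pic}^0(C)$, the doubling trick cleanly separates the $2$-torsion and torsion-free parts, and the rest is direct linear algebra.
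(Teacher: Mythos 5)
Your proposal is correct and follows essentially the same route as the paper: order the associated points via Lemma~\ref{lemma: 2 torsion and our points}, derive the relations in (i) from the tangent-line condition $p_{i0}+2p_{ij}=3p_0$, and for (ii) double the relation to kill the $2$-torsion, substitute $2(p_{i4}-p_0)=-(p_{i0}-p_0)$, and invoke independence. The only (cosmetic) difference is that you eliminate $p_{10}$ at the outset whereas the paper substitutes $p_{10}+2p_{14}=3p_0$ at the end; the computations are otherwise identical.
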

\begin{proof}
	By Lemma~\ref{lemma: 2 torsion and our points}, we have $C[2]=\{p_{ik}-p_{i4} \thinspace | \thinspace k \in \{1,2,3,4\}\}$, so we may choose the order of the associated points such that $\delta_k=p_{ik}-p_{i4}$ for $k=1,2,3$. 
	\par As for \ref{item: relations on points}, the first equation is due to the nature of the points chosen, and the second equation follows from 
	\[p_{i0}+2p_{i4}=3p_0=p_{10}+2p_{14}.\]
	The last three equations hold by choice of the order of the associated points, where we use $\delta_3=\delta_1+\delta_2$. 
	\par To prove \ref{item: independence implies conditions}, we note that due to degree reason, we have $m+\overset{r}{\underset{i=1}{\sum}}n_i=3d$. Then, we multiply the equation by two and use $2\delta_1=2\delta_2=0$:
	\[2m(p_{10}-p_0)+\underset{i=1}{\overset{r}{\sum}}2n_i(p_{i4}-p_0)=0.\]
	Using $2(p_{i4}-p_0)=-(p_{i0}-p_0)$, we obtain
	\[(2m-n_1)(p_{10}-p_0)-\underset{i=2}{\overset{r}{\sum}}n_i(p_{i0}-p_0)=0.\]
	Since the $p_{i0}$'s are independent,
	\[2m=n_1, \quad n_2=\ldots=n_r=0,\]
	and thereby,
	\begin{align}
		mp_{10}+2mp_{14}+s_1\delta_1+s_2\delta_2=3dp_0. \label{eq: almost there}
	\end{align}
	As $\delta_1$ and $\delta_2$ are of degree $0$, we may deduce $3m=3d$, implying $m=d$ and $n_1=2d$. Due to $p_{10}+2p_{14}=3p_0$, we find from \eqref{eq: almost there} that
	\[s_1\delta_1+s_2\delta_2=0.\]
	Applying Lemma~\ref{lemma: 2 torsion and our points}, we conclude that $s_1\equiv s_2 \equiv 0\, \easy{mod} 2$. This completes the proof.
\end{proof}

With the above proposition, we can show that independence of the $p_{10}, \ldots, p_{r0}$ implies that no three points out of the $p_{ij}$'s are collinear.

\begin{corollary} \label{cor: independent implies not collinear}
	If the points $p_{10}, \ldots, p_{r0} \in C$ with $r\geq 1$ are independent, then no three points out of the $p_{ij}$'s, $1\leq i \leq r$, $0 \leq j \leq 4$, are collinear.
\end{corollary}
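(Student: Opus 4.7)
The plan is to translate collinearity into an identity in $\operatorname{Pic}(C)$ and then apply Proposition~\ref{prop: all about relations on points}. Three distinct points $P_1, P_2, P_3$ on a smooth cubic $C$ are collinear if and only if $P_1 + P_2 + P_3 = 3p_0$ in $\operatorname{Pic}(C)$, because a hyperplane section of $C$ is linearly equivalent to the tangent divisor $3p_0$ at the inflection point $p_0$.

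Suppose, for contradiction, that three distinct points $p_{i_1 j_1}, p_{i_2 j_2}, p_{i_3 j_3}$ among the $p_{ij}$'s are collinear. Using the relations of Proposition~\ref{prop: all about relations on points}~\ref{item: relations on points}, each summand can be rewritten: a point with $j_k = 0$ contributes $p_{10} + 2 p_{14} - 2 p_{i_k 4}$, and a point with $j_k \in \{1,2,3,4\}$ contributes $p_{i_k 4}$ plus a combination of $\delta_1, \delta_2$. Summing, the collinearity identity becomes an expression of the form
\[
m \cdot p_{10} + \sum_{i=1}^{r} n_i \cdot p_{i4} + s_1 \delta_1 + s_2 \delta_2 = 3 p_0,
\]
and Proposition~\ref{prop: all about relations on points}~\ref{item: independence implies conditions} with $d = 1$ forces $m = 1$, $n_1 = 2$, $n_i = 0$ for all $i \geq 2$, and $s_1 \equiv s_2 \equiv 0 \pmod{2}$.

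Since $m$ equals the number of the three points with $j_k = 0$, exactly one of them is of type $p_{i^* 0}$ for some index $i^*$. I then track contributions to each $n_i$: the point $p_{i^* 0}$ contributes $+2$ to $n_1$ and $-2$ to $n_{i^*}$, while each of the other two points contributes $+1$ to $n_{i_k}$. The constraints $n_1 = 2$ and $n_i = 0$ for $i \neq 1$ then force both remaining points to have index $i^*$ as well. After this reduction, the three points are $p_{i^* 0}, p_{i^* j_2}, p_{i^* j_3}$ with $j_2 \neq j_3$ in $\{1, 2, 3, 4\}$.

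It remains to check the parity of $(s_1, s_2)$ for each of the six unordered pairs $\{j_2, j_3\} \subset \{1,2,3,4\}$, using the expansions $p_{i^*1} = \delta_1 + p_{i^*4}$, $p_{i^*2} = \delta_2 + p_{i^*4}$, $p_{i^*3} = \delta_1 + \delta_2 + p_{i^*4}$, $p_{i^*4} = p_{i^*4}$. A direct inspection shows that in every one of the six cases at least one of $s_1, s_2$ is odd, contradicting the parity condition. The real work is essentially bookkeeping; the conceptual heart is recognising that the independence assumption is precisely what converts a potential collinearity into the rigid linear-algebraic obstruction supplied by Proposition~\ref{prop: all about relations on points}~\ref{item: independence implies conditions}.
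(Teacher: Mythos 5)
Your proof is correct and follows essentially the same route as the paper's: translate collinearity into $P_1+P_2+P_3=3p_0$ in $\operatorname{Pic}(C)$, expand in the basis of Proposition~\ref{proposition: subgroup of Pic(C) with many points} via Proposition~\ref{prop: all about relations on points}~\ref{item: relations on points}, and invoke \ref{item: independence implies conditions} to pin down the coefficients. The only (cosmetic) difference is that the paper applies \ref{item: independence implies conditions} a second time to the derived relation $2p_{i4}=p_{k\ell}+p_{st}$ and contradicts distinctness, whereas you read the same conclusion off the $n_i$'s directly and finish with the six-case parity check on $(s_1,s_2)$.
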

\begin{proof}
	Suppose by contradiction that $p_{ij}$, $p_{k\ell}$ and $p_{st}$ are collinear. By Proposition~\ref{prop: all about relations on points}, \ref{item: relations on points}, we know that we can write $p_{ij}+p_{k\ell}+p_{st}=3p_0$ as
	\begin{align}
		mp_{10}+\underset{i=1}{\overset{r}{\sum}}n_ip_{i4}+s_1\delta_1+s_2\delta_2=p_{ij}+p_{k\ell}+p_{st}=3p_0, \label{eq: rewriting p_ij+p_kl+p_st}
	\end{align}
	where $m, s_1, s_2 \in \{0,1,2,3\}$ and $n_1, \ldots, n_r \in \mathbb{Z}$.
	Using \ref{item: independence implies conditions}, we obtain $m=1$, $n_1=2$, $n_2= \ldots = n_r=0$ and $s_1\equiv s_2 \equiv 0\, \easy{mod} 2$. This implies, with \ref{item: relations on points}, that exactly one of the indices $j$, $\ell$, $t$ is equal to zero. Up to exchanging the points, we may assume $j=0$ and $\ell, t \neq 0$. Then, again with \ref{item: relations on points},
	\[p_{10}+2p_{14}=p_{i0}+p_{k\ell}+p_{st}=p_{10}+2p_{14}-2p_{i4}+p_{k\ell}+p_{st},\]
	which implies $2p_{i4}=p_{k\ell}+p_{st}$.
	Apply once more \ref{item: relations on points} to obtain
	\[2p_{i4}=p_{k\ell}+p_{st}=p_{k4}+p_{s4}+s_1'\delta_1+s_2'\delta_2,\]
	where $s_1', s_2' \in \{0,1,2\}$. With \ref{item: independence implies conditions}, this can only be if $k=s=i$ and $s_1'\equiv s_2' \equiv 0 \, \easy{mod} 2$. This implies $p_{k\ell}=p_{st}$, a contradiction to the points being distinct.
\end{proof}

As a consequence of Proposition~\ref{prop: all about relations on points}, we can find a specific base of the subgroup of $\easy{Pic}(C)$ generated by the $p_{ij}$'s.

\begin{proposition} \label{proposition: subgroup of Pic(C) with many points}
	Consider a smooth cubic curve $C\subset \mathbb{P}^2$ and an inflection point $p_0 \in C$ as neutral element. Fix a labelling $\{0,\delta_1, \delta_2, \delta_3\}=C[2]\cong\left(\mathbb{Z}/2\mathbb{Z}\right)^2$. Choose, for $r\geq 1$, independent points $p_{10}, \ldots, p_{r0} \in C$ and label their associated points $p_{ij}$ such that $ \delta_k =p_{ik} - p_{i4}$ for $k = 1, 2, 3$. Then we have an isomorphism
	\begin{align*}
		\raisebox{-1pt}{$\mathbb{Z}p_{10}$} \oplus \overset{r}{\underset{i=1}{\bigoplus}} \, \raisebox{-1pt}{$\mathbb{Z}p_{i4}$} \oplus  \overset{2}{\underset{j=1}{\bigoplus}} \, \raisebox{-1pt}{$( \mathbb{Z}/2\mathbb{Z})\delta_j$} & \,\raisebox{-1pt}{$\overset{\sim}{\longrightarrow} \langle \, p_{ij} \thinspace | \thinspace 1 \leq i \leq r, 0\leq j \leq 4 \, \rangle,$} \\
		(m,n_1, \ldots, n_r,s_1,s_2) &\mapsto  mp_{10} + \underset{i=1}{\overset{r}{\sum}} n_i p_{i4} + s_1\delta_1+s_2\delta_2.
	\end{align*}
\end{proposition}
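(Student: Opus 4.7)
The statement claims that a specific group homomorphism is an isomorphism, so the plan is to verify well-definedness, surjectivity, and injectivity in turn; both nontrivial steps reduce directly to Proposition~\ref{prop: all about relations on points}.

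First I would observe that the map is a well-defined group homomorphism: the only subtlety is that the last two coordinates live in $\mathbb{Z}/2\mathbb{Z}$, and replacing $s_1$ by $s_1+2$ (or $s_2$ by $s_2+2$) changes the image by $2\delta_1=0$ (respectively $2\delta_2=0$) in $\easy{Pic}(C)$, using Lemma~\ref{lemma: 2 torsion and our points}.

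Next, for surjectivity, it suffices to hit each generator $p_{ij}$ of the target. This is immediate from Proposition~\ref{prop: all about relations on points}~\ref{item: relations on points}: the generator $p_{10}$ is the image of $(1,0,\dots,0,0,0)$; each $p_{i4}$ comes from the obvious tuple; $p_{i0}$ is the image of the tuple corresponding to $p_{10}+2p_{14}-2p_{i4}$; and $p_{i1}=\delta_1+p_{i4}$, $p_{i2}=\delta_2+p_{i4}$, $p_{i3}=\delta_1+\delta_2+p_{i4}$ are all clearly in the image.

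For injectivity, suppose
\[
mp_{10}+\sum_{i=1}^r n_i p_{i4}+s_1\delta_1+s_2\delta_2 = 0 \quad\text{in }\easy{Pic}(C),
\]
with $m,n_1,\dots,n_r\in\mathbb{Z}$ and $s_1,s_2\in\{0,1\}$. Rewriting $0=3\cdot 0\cdot p_0$ and applying Proposition~\ref{prop: all about relations on points}~\ref{item: independence implies conditions} with $d=0$ yields $m=0$, $n_1=2\cdot 0=0$, $n_2=\cdots=n_r=0$, and $s_1\equiv s_2\equiv 0\pmod 2$, so $s_1=s_2=0$ in $\mathbb{Z}/2\mathbb{Z}$. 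Hence the kernel is trivial and the map is an isomorphism.

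\textbf{Main obstacle.} There is essentially none: the proposition is a direct packaging of Proposition~\ref{prop: all about relations on points}. The only thing requiring any care is the bookkeeping between the integer coefficients appearing in part~\ref{item: independence implies conditions} and the $\mathbb{Z}/2\mathbb{Z}$-coordinates of the domain, but this is handled by the congruences $s_1\equiv s_2\equiv 0\pmod 2$ precisely translating to vanishing in $\mathbb{Z}/2\mathbb{Z}$.
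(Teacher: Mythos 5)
Your proposal is correct and follows essentially the same route as the paper: surjectivity by hitting each generator $p_{ij}$ via the relations in Proposition~\ref{prop: all about relations on points}~\ref{item: relations on points}, and injectivity by applying part~\ref{item: independence implies conditions} with $d=0$. The brief well-definedness check for the $\mathbb{Z}/2\mathbb{Z}$ coordinates is a harmless addition the paper leaves implicit.
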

\begin{proof}
	\par Surjectivity can be shown by hitting all the generators: this is guaranteed by the relations in \ref{item: relations on points} of Proposition~\ref{prop: all about relations on points}. For injectivity, we take $(m, n_1, \ldots, n_r, s_1, s_2)$ such that
	\[mp_{10} + \underset{i=1}{\overset{r}{\sum}} n_i p_{i4} + s_1\delta_1+s_2\delta_2=0.\]
	By Proposition~\ref{prop: all about relations on points}, \ref{item: independence implies conditions}, we find $m=n_1=\ldots=n_r=0$ and $s_1\equiv s_2 \equiv 0 \, \easy{mod} 2$, showing injectivity and finishing the proof.
\end{proof}

\section{Conditions on automorphisms of $X$} \label{section: condition on automorphisms of X}

Let $X$ be the blow-up of $\mathbb{P}^2$ in finitely many points $p_{ij}$. The action of $\easy{Aut}(X)$ on $\easy{Pic}(X)$ gives us a representation 
\begin{equation} \label{eq: representation}
	1 \rightarrow \easy{ker}\tau \hookrightarrow \easy{Aut}(X) \overset{\tau}{\rightarrow} \easy{Aut}(\easy{Pic}(X)).
\end{equation}
Any $g \in \easy{ker}\tau$ sends each $(-1)$-curve onto itself. Taking such a $g$, we find that it descends to an automorphism of $\mathbb{P}^2$ which fixes the points $p_{ij}$ pointwise. 
\par By Corollary~\ref{cor: independent implies not collinear}, this situation is granted for points $p_{ij}$ lying on a smooth cubic curve $C$, where $p_{10}, \ldots, p_{r0}$ are independent and $p_{ij}$ with $j \neq 0$ is associated with $p_{i0}$. Therefore, we can view $\easy{Aut}(X)$ as a subgroup of $\easy{Aut}(\easy{Pic}(X))$, and any $g \in \easy{Aut}(X)$ by its action $g^{\ast}$ on the Picard group
\[ \raisebox{-0.5pt}{$\easy{Pic}(X)=\mathbb{Z}[L]$} \oplus \underset{i,j}{\bigoplus} \,  \raisebox{-0.5pt}{$\mathbb{Z}[E_{ij}]$},\]
where $L$ is the strict transform of a line in $\mathbb{P}^2$ not passing through any of the $p_{ij}$'s, and the $E_{ij}$ are the exceptional curves of $X$.
\par What about $\sigma_1, \ldots, \sigma_r$ introduced in Section~\ref{section: Cubic involutions}? 
\begin{lemma}[{\cite[{Lemma\,$17$}]{MR2492397}}] \label{lemma: action of sigma_i}
	The induced action of $\sigma_i$ on $\easy{Pic}(X)$ is given by
	\begin{align*}
		\sigma_i^{\ast} : [L] &\mapsto 3[L]-2[E_{i0}]-[E_{i1}]- \cdots - [E_{i4}], \\
		[E_{i0}]&\mapsto 2[L]-[E_{i0}]-[E_{i1}]- \cdots - [E_{i4}], \\
		[E_{ij}] & \mapsto [L]-[E_{i0}] - [E_{ij}], \\
		[E_{k\ell}] & \mapsto [E_{k\ell}],
	\end{align*}
	for $j \neq 0$ and $k\neq i$.
\end{lemma}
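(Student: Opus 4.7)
The approach will be to read off the rules on the basis $\{[L], [E_{ij}]\}$ of $\easy{Pic}(X)$ directly from the geometry of the cubic Cremona involution $\widehat{\sigma}_{p_{i0}}$, and to use that $\sigma_i^{\ast}$ is an involution preserving the intersection form whenever a direct geometric identification is awkward.

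I would begin with the easy case: for $k \neq i$, the point $p_{k\ell}$ lies on $C$ and is not a base point of $\widehat{\sigma}_{p_{i0}}$. Since $\widehat{\sigma}_{p_{i0}}|_C = \easy{id}_C$, the point $p_{k\ell}$ is fixed, and the relation $\pi \sigma_i = \widehat{\sigma}_{p_{i0}} \pi$ forces $\sigma_i(E_{k\ell}) \subseteq \pi^{-1}(p_{k\ell}) = E_{k\ell}$; being an involution, $\sigma_i$ restricts to an automorphism of $E_{k\ell}$, hence $\sigma_i^{\ast}[E_{k\ell}] = [E_{k\ell}]$. Next, $\sigma_i^{\ast}[L]$ follows from the characteristic of $\widehat{\sigma}_{p_{i0}}$: it is a Cremona map of degree $3$ with a double base point at $p_{i0}$ and simple base points at $p_{i1}, \ldots, p_{i4}$ (see \cite[Prop.~12]{MR2492397}). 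Hence the preimage of a general line is a nodal plane cubic with node at $p_{i0}$ passing simply through each $p_{ij}$, $j \neq 0$, whose strict transform in $X$ has class $3[L] - 2[E_{i0}] - [E_{i1}] - \cdots - [E_{i4}]$. For $j \neq 0$, the tangent line $T_{ij}$ to $C$ at $p_{ij}$ passes through $p_{i0}$ by definition of the associated points and meets $C$ with multiplicity $2$ at $p_{ij}$; in the standard resolution of $\widehat{\sigma}_{p_{i0}}$, the exceptional divisor $E_{ij}$ is mapped birationally onto the strict transform of $T_{ij}$, which has class $[L] - [E_{i0}] - [E_{ij}]$.

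For the remaining class $\sigma_i^{\ast}[E_{i0}]$, two routes are available. Geometrically, one identifies $\sigma_i(E_{i0})$ with the strict transform of the unique conic through $p_{i0}, p_{i1}, \ldots, p_{i4}$, of class $2[L] - [E_{i0}] - [E_{i1}] - \cdots - [E_{i4}]$. More cleanly, write $\sigma_i^{\ast}[E_{i0}] = a[L] + b[E_{i0}] + \sum_{j=1}^{4} c_j [E_{ij}] + \sum_{k \neq i,\ell} d_{k\ell}[E_{k\ell}]$ and impose $\sigma_i^{\ast}[E_{i0}] \cdot \sigma_i^{\ast}[D] = [E_{i0}] \cdot D$ for $D$ running through the basis; using the formulas already obtained, this uniquely pins down $a = 2$, $b = c_1 = \cdots = c_4 = -1$, $d_{k\ell} = 0$.

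The main obstacle is the rigorous justification that the exceptional divisor $E_{ij}$ ($j \neq 0$) is sent by $\sigma_i$ to the strict transform of the tangent line $T_{ij}$, which requires carefully tracking the resolution of $\widehat{\sigma}_{p_{i0}}$ through iterated blow-ups. This obstacle can be bypassed altogether by the algebraic route above: once $\sigma_i^{\ast}[L]$ and $\sigma_i^{\ast}[E_{k\ell}]$ are in hand, the isometry property of $\sigma_i^{\ast}$ together with $(\sigma_i^{\ast})^2 = \easy{id}$ determines the remaining entries of the matrix of $\sigma_i^{\ast}$ uniquely, yielding the stated formulas.
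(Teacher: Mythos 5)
The paper does not prove this lemma at all: it is quoted verbatim as Lemma~17 of Blanc's paper \cite{MR2492397}, so any argument you supply is necessarily ``different from the paper's.'' Your main line of argument is the standard one and is essentially correct: since all five base points $p_{i0},\dots,p_{i4}$ are distinct proper points of $\mathbb{P}^2$, the resolution of $\widehat{\sigma}_{p_{i0}}$ is the single blow-up of these points; the characteristic $(3;2,1,1,1,1)$ gives $\sigma_i^{\ast}[L]$; the curves contracted to $p_{i1},\dots,p_{i4}$ are the tangent lines $T_{ij}$ (on $T_{ij}$ the two fixed points of the fibrewise involution collide at $p_{ij}$, so $T_{ij}$ is contracted there), which gives $\sigma_i^{\ast}[E_{ij}]=[L]-[E_{i0}]-[E_{ij}]$; the curve contracted to $p_{i0}$ is the conic through the five points; and $E_{k\ell}$ is fixed because $\widehat{\sigma}_{p_{i0}}$ restricts to the identity on $C$. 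The ``main obstacle'' you flag is milder than you suggest, precisely because there are no infinitely near base points to track.

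However, the purely lattice-theoretic ``bypass'' in your last paragraph is not valid as stated. Knowing only that $\sigma_i^{\ast}$ is an isometric involution fixing each $[E_{k\ell}]$ ($k\neq i$) and sending $[L]$ to $3[L]-2[E_{i0}]-\sum_j[E_{ij}]$ does \emph{not} determine the images of $[E_{i1}],\dots,[E_{i4}]$ individually: composing the stated $\sigma_i^{\ast}$ with the isometry that transposes $[E_{i1}]$ and $[E_{i2}]$ (and fixes the other basis elements) yields another isometric involution with the same action on $[L]$ and on the $[E_{k\ell}]$, since this transposition commutes with $\sigma_i^{\ast}$. So the images of the $[E_{ij}]$, $j\neq 0$, are pinned down only as a set, up to an $S_4$-permutation, and the precise assignment $[E_{ij}]\mapsto[L]-[E_{i0}]-[E_{ij}]$ genuinely requires the geometric identification of $\sigma_i(E_{ij})$ with the strict transform of $T_{ij}$. (By contrast, your algebraic determination of $\sigma_i^{\ast}[E_{i0}]$ from the other formulas is fine: orthogonality to the images of the basis, self-intersection $-1$ and the involution condition leave only $2[L]-[E_{i0}]-\cdots-[E_{i4}]$.) Keep the tangent-line argument as the actual proof of that case and drop the claim that the isometry conditions alone suffice.
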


\par Now that we know both $\easy{Aut}(X) \leq \easy{Aut}(\easy{Pic}(X))$ and what the action of the $\sigma_i$ looks like on $\easy{Aut}(\easy{Pic}(X))$, we can introduce helpful properties on automorphisms $g$ of $X$:
\begin{enumerate}[leftmargin=*]
	\item The induced automorphism $g^{\ast}$ of $\easy{Pic}(X)$ preserves the intersection form, meaning that for any two divisors $[D]$, $[D']$, we have 
	\begin{equation}
		g^{\ast}([D])\cdot g^{\ast}([D'])=[D]\cdot [D']. \tag{$\spadesuit$} \label{cond: first condition of stabiliser}
	\end{equation} 
	\item The canonical divisor is mapped to itself under $g^{\ast}$, meaning 
	\begin{equation}
		g^{\ast}(K_X)=K_X. \tag{$\diamondsuit$}\label{cond: second condition of stabiliser}
	\end{equation}
\end{enumerate}
With the help of the first condition, we can infer that automorphisms of $X$ commute with the anti-regular involution $\rho$, as long as $X$ is the blow-up of real points.

\begin{lemma} \label{lemma: real automorphisms}
	If $X$ is the blow-up in real points of $\mathbb{P}^2$, then all $(-1)$-curves are real. If, in addition, out of four of these points, no three are collinear, then all automorphisms are real.
\end{lemma}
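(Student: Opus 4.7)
The backbone of both assertions is that $\rho^{\ast}$ acts as the identity on $\easy{Pic}(X)$. Since every blown-up point is real, $\rho$ preserves each exceptional divisor $E_{ij}$ set-wise, giving $\rho^{\ast}[E_{ij}]=[E_{ij}]$; and any real line in $\mathbb{P}^2$ avoiding the $p_{ij}$'s is preserved set-wise by $\widehat{\rho}$, so $\rho^{\ast}[L]=[L]$. Since these classes generate $\easy{Pic}(X)$, we conclude $\rho^{\ast}=\easy{id}$. We shall also use repeatedly the fact that an irreducible curve of negative self-intersection is uniquely determined by its divisor class: any two distinct such irreducible curves would intersect non-negatively, contradicting a common self-intersection of $-1$.

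For the first claim, let $C\subset X$ be any $(-1)$-curve. Then $\rho(C)$ is another smooth rational curve with self-intersection $-1$, and its class equals $\rho^{\ast}[C]=[C]$; the uniqueness observation then forces $\rho(C)=C$.

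For the second claim, take $g\in\easy{Aut}(X)$ and set $g'\coloneqq \rho g\rho^{-1}\in\easy{Aut}(X)$ (a regular automorphism, being the composition of three anti- and regular maps making an overall regular one). Using $\rho^{\ast}=\easy{id}$, the contravariance of pullback gives $(g')^{\ast}=(\rho^{-1})^{\ast}g^{\ast}\rho^{\ast}=g^{\ast}$, so $g^{-1}g'\in\ker\tau$, where $\tau$ is the representation from \eqref{eq: representation}. It therefore suffices to show that $\ker\tau$ is trivial. Any $h\in\ker\tau$ fixes every class $[E_{ij}]$, and by the uniqueness fact it preserves each $E_{ij}$ set-wise; hence $h$ descends to an automorphism $\widehat{h}\in\easy{PGL}_3(\mathbb{C})=\easy{Aut}(\mathbb{P}^2)$ fixing each $p_{ij}$. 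Under the hypothesis, four of the $p_{ij}$'s form a projective basis of $\mathbb{P}^2$, so $\widehat{h}=\easy{id}$ and thus $h=\easy{id}_X$. Consequently $g'=g$, i.e.\ $\rho$ and $g$ commute. The only genuinely non-routine ingredient is the uniqueness of an irreducible curve within a negative-self-intersection class; the rest is a formal manipulation with $\tau$ and the real structure.
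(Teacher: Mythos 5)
Your proof is correct and follows essentially the same route as the paper: both arguments establish that $\rho$ acts trivially on $\operatorname{Pic}(X)$ to deduce that every $(-1)$-curve is real, and both show that the commutator $g^{-1}\rho g\rho$ fixes all exceptional curves and hence descends to a projectivity of $\mathbb{P}^2$ fixing four points in general position, forcing it to be the identity. The only cosmetic difference is that you package the second step via the kernel of the Picard representation rather than arguing directly with $(-1)$-curves.
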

\begin{proof}
	\par Recall that we denote by $\widehat{\rho}$ the real structure on $\mathbb{P}^2$ and by $\rho$ the unique real structure on $X$ such that the blow-up is real.
	\par Let $E$ be a $(-1)$-curve on $X$. Write it as
	\[E \sim dL-\underset{i}{\sum} m_i E_i,\]
	where $E_i$ are the exceptional curves of $X$ and $L$ is the pullback of a line of $\mathbb{P}^2$ not passing through the points blown-up. As we only blow up in real points, we have $\rho(E_i)=E_i$, since $\widehat{\rho}$ maps a real point to itself. We can thus calculate
	\[[\rho^{-1}(E)]=\rho^{\ast}([E])=d\rho^{\ast}([L])-\underset{i}{\sum} m_i \rho^{\ast}([E_i])=d[L]-\underset{i}{\sum} m_i [E_i]=[E],\]
	where we used that $\rho([L])=[L]$. Since $\rho(E)\cdot E=-1$, we get $\rho(E)=E$.
	\par For the second claim, consider $\alpha \in \easy{Aut}(X)$. For each $(-1)$-curve $E$, we have $\rho \alpha \rho(E)=\rho \alpha(E)$, since $\rho(E)=E$. Due to $\alpha$ preserving the intersection form by \eqref{cond: first condition of stabiliser}, the curve $\alpha(E)$ is also of self-intersection $-1$. As a result, $\rho \alpha(E)=\alpha(E)$.
	Hence, $\rho \alpha \rho(E)=\alpha(E)$. Consequently, $\alpha^{-1}\rho \alpha \rho$ maps every $(-1)$-curve to itself; as there are four points where no three are collinear, this implies that $\alpha^{-1}\rho \alpha \rho=\easy{id}_X$, as desired. 
\end{proof}
Thus, in our set-up, we have to assume that all associated points $p_{ij}$ are also real, so that for any automorphism $\alpha \in \easy{Aut}(X)$, we have $\alpha \rho = \rho \alpha$. This will then help in the proof of Theorem~\ref{thm: main theorem, at least r real forms}.

The second condition \eqref{cond: second condition of stabiliser} leads to the fact that any automorphism of $X$ must map the strict transform of the smooth cubic curve $C$ to itself:

\begin{lemma} \label{lemma: auto of X restricts to auto of C}
	Fix a smooth cubic curve $C \subset \mathbb{P}^2$, and choose at least ten points lying on $C$. Then any automorphism $g$ of the blow-up $X$ in those points restricts to an automorphism of the strict transform $\widetilde{C}$. 
\end{lemma}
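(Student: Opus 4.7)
My plan is to apply condition $(\diamondsuit)$ from the previous paragraphs, namely $g^\ast(K_X)=K_X$. This implies that $g^\ast$ preserves the anticanonical linear system $|-K_X|$ and, in particular, permutes its effective members. If I can show that $|-K_X|$ consists of the single effective divisor $\widetilde{C}$, then $g^\ast \widetilde{C}=\widetilde{C}$ as divisors, hence $g(\widetilde{C})=\widetilde{C}$, and the restriction $g|_{\widetilde{C}}$ is the desired automorphism of the strict transform.

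The identification $\widetilde{C}\in|-K_X|$ is immediate: blowing up $n\ge 10$ points on $\mathbb{P}^2$ yields $K_X=-3L+\sum_{i,j}E_{ij}$, and since $C$ is a smooth cubic passing through each $p_{ij}$, its strict transform has class $\widetilde{C}\sim 3L-\sum_{i,j}E_{ij}=-K_X$.

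The main step will be uniqueness of the effective member. Given an effective $D\sim -K_X$, I will write $D=\widetilde{D}_0+\sum_{i,j}c_{ij}E_{ij}$ with $c_{ij}\ge 0$, where $\widetilde{D}_0$ is the strict transform of the curve $D_0:=\pi_\ast D$ on $\mathbb{P}^2$. From $D\cdot E_{ij}=(-K_X)\cdot E_{ij}=1$, one computes that the multiplicity of $D_0$ at $p_{ij}$ equals $1+c_{ij}\ge 1$, and comparing the coefficient of $L$ shows that $D_0$ is a plane cubic. Thus $D_0$ is a cubic meeting the irreducible smooth cubic $C$ at the $n\ge 10$ distinct points $p_{ij}$; by B\'ezout's theorem this forces $C$ to be a component of $D_0$, and comparing degrees then yields $D_0=C$. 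Consequently $c_{ij}=0$ for all $i,j$ and $D=\widetilde{C}$, as required.

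I do not anticipate any serious obstacle beyond the intersection-number bookkeeping; the hypothesis of at least ten points on $C$ is precisely what activates the B\'ezout argument, since two distinct plane cubics without a common component meet in at most $9$ points.
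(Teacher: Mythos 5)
Your argument is correct, but the decisive step differs from the paper's. Both proofs start from condition \eqref{cond: second condition of stabiliser}, i.e.\ $g^{\ast}(K_X)=K_X$, so that $g^{-1}(\widetilde{C})$ and $\widetilde{C}$ lie in the same (anticanonical) divisor class; the divergence is in how one passes from \enquote{same class} to \enquote{same curve}. The paper stays on $X$: since $s\geq 10$ points are blown up, $[\widetilde{C}]^2=9-s<0$, hence $\widetilde{C}\cdot g^{-1}(\widetilde{C})<0$, and two \emph{distinct} irreducible curves always meet nonnegatively, forcing $g^{-1}(\widetilde{C})=\widetilde{C}$. You instead descend to $\mathbb{P}^2$ and prove the stronger statement that $\widetilde{C}$ is the \emph{unique} effective member of $|-K_X|$: writing an effective $D\sim -K_X$ as $\widetilde{D}_0+\sum c_{ij}E_{ij}$, the intersection numbers $D\cdot E_{ij}=1$ and the coefficient of $L$ show $D_0$ is a plane cubic through the $\geq 10$ distinct points $p_{ij}\in C$, whence B\'ezout forces $D_0=C$ and $c_{ij}=0$. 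Your bookkeeping is right ($m_{ij}=1+c_{ij}$, $\deg D_0=3$), and the hypothesis of at least ten points enters both proofs in essentially the same guise ($9-s<0$ versus $s>9=\deg C\cdot\deg D_0$). The paper's route is shorter and needs only irreducibility of the two curves; yours costs a little more intersection-theoretic bookkeeping but yields the extra fact $h^0(X,-K_X)=1$, i.e.\ rigidity of the anticanonical system, which is a stronger conclusion than is strictly needed here. Either way the final step is the same: $\widetilde{C}\cong C$ is smooth, so the restriction $g|_{\widetilde{C}}$ is an automorphism.
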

\begin{proof}
	\par Every automorphism $g\in \easy{Aut}(X)$ induces an automorphism $g^{\ast}$ of $\easy{Pic}(X)$ which satisfies \eqref{cond: second condition of stabiliser}. Therefore, we find $[g^{-1}(\widetilde{C})]=g^{\ast}([\widetilde{C}])=g^{\ast}(-K_X)=-g^{\ast}(K_X)=-K_X=[\widetilde{C}]$,
	which implies that the two irreducible curves $\widetilde{C}$ and $g^{-1}(\widetilde{C})$ lie in the same divisor class. Furthermore, we can calculate $[\widetilde{C}]$ as $[\widetilde{C}]=3[L]-\sum[E_{i}]$,
	where $L$ is the pullback of a line in $\mathbb{P}^2$ not passing through the points blown up, and the $E_i$ are the exceptional curves.
	This implies that $[\widetilde{C}]^2=9-s$, where $s\geq 10$ is the number of points blown up. So we must have $[\widetilde{C}]^2<0$. From that we obtain $\widetilde{C} \cdot g^{-1}(\widetilde{C}) <0$, so $g^{-1}(\widetilde{C})=\widetilde{C}$. Since $\widetilde{C} \cong C$ is smooth, we find that $g|_{\widetilde{C}}$ is an automorphism.
\end{proof}
Note that since $C \cong \widetilde{C}$, we can view $g|_{\widetilde{C}}$ also as an automorphism of $C$.  In addition, blowing up at least ten points lying on a smooth cubic curve implies that $\tau$ in \eqref{eq: representation} is injective: any nontrivial automorphism of $X$ being mapped to the identity would fix at least ten exceptional curves and thus descend to an automorphism of $\mathbb{P}^2$ fixing at least ten points. Since any nontrivial automorphism of $\mathbb{P}^2$ having at least ten fixed points fixes four points lying on a line, this is a contradiction to B\'ezout.

\section{Existence of suitable points} \label{section: existence of suitable points}

\par Consider the following set-up resulting from the discussion in the previous sections: the $p_{i0}$'s are assumed to be independent as defined in Definition~\ref{def: independent}, any $p_{ij}$ with $j \neq 0$ is an associated point of $p_{i0}$ and they are all assumed to be real. As observed in Section~\ref{section: independent points}, independence excludes $3$-torsion elements, so none of the points can be an inflection point, and hence all have precisely four associated points. Therefore, we would like to prove the existence of points $p_{10}, \ldots, p_{r0}$ which are real, have only real associated points, and are independent. 
\par To achieve this, we will need to assume that the smooth cubic curve $C$ is defined over $\mathbb{R}$ and that the set of real points of $C$ has two components in the Euclidean topology. This assumption is equivalent to other useful statements:

\begin{lemma} \label{lemma: equivalent statements on real points of C}
	Let $C \subset \mathbb{P}^2$ be a smooth cubic curve defined over $\mathbb{R}$ and denote by $C(\mathbb{R})$ the real points of $C$, then the following three statements are equivalent:
	\begin{enumerate}[leftmargin=*]
		\item The set $C(\mathbb{R})$  has two connected components in the Euclidean topology. \label{item: connected components}
		\item For every group structure given to $C(\mathbb{C})$ by choosing a neutral point $p_0 \in C(\mathbb{R})$, the $2$-torsion elements are all real. \label{item: 2-torsion real}
		\item There exists a real point $p \in C(\mathbb{R})$ with four real associated points. \label{item: existence of good point}
	\end{enumerate}
	Moreover, if the above conditions are satisfied, then a real point has at least one real associated point if and only if it has only real associated points.
\end{lemma}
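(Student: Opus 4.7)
My plan is to anchor everything on the classification of $C(\mathbb{R})$ as a real Lie group. I would first fix a real inflection point $p_0 \in C(\mathbb{R})$ as the group identity; such a point exists on any real smooth plane cubic because the inflection locus is a 3-torsion coset for the elliptic group law, and in a one-dimensional compact real abelian Lie group the 3-torsion lies in the identity component and is therefore real. With this basepoint, $C(\mathbb{R})$ becomes an algebraic subgroup of $C(\mathbb{C})$, and topologically it is a compact abelian real Lie group of dimension one, hence isomorphic either to $\mathbb{R}/\mathbb{Z}$ (one Euclidean component) or to $\mathbb{R}/\mathbb{Z} \times \mathbb{Z}/2\mathbb{Z}$ (two components). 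This dichotomy is where all the content of the lemma lives.

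For (i) $\Leftrightarrow$ (ii), I would just count: the 2-torsion subgroup of $\mathbb{R}/\mathbb{Z}$ has $2$ elements, whereas that of $\mathbb{R}/\mathbb{Z} \times \mathbb{Z}/2\mathbb{Z}$ has $4$, and $C[2] \cong (\mathbb{Z}/2\mathbb{Z})^2$ always has $4$. So all of $C[2]$ is real if and only if $C(\mathbb{R})$ has two components. Independence of condition (ii) from the choice of real basepoint is automatic: changing $p_0$ to another real basepoint $p_0'$ merely translates the set of 2-torsion points by the real element $p_0' - p_0$, so reality of the whole set is preserved.

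For (ii) $\Leftrightarrow$ (iii) and the last assertion, I would invoke Lemma~\ref{lemma: 2 torsion and our points}: the four associated points $p_1, \ldots, p_4$ of a non-inflection $p \in C$ satisfy $p_k - p_\ell \in C[2]$, so they form a single coset of $C[2]$. Consequently, once $C[2]$ is entirely real, the existence of one real associated point forces all four to be real, which yields the \enquote{moreover} statement. To derive (iii) from (ii) I still need to produce some real non-inflection $p$ having at least one real associated point. Here the key subtle step, and the main small obstacle I anticipate, is that the doubling map on $C(\mathbb{R}) \cong \mathbb{R}/\mathbb{Z} \times \mathbb{Z}/2\mathbb{Z}$ is not surjective: its image is precisely the identity component $C(\mathbb{R})^0$. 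The fix is to pick $p$ inside $C(\mathbb{R})^0$, avoiding the finitely many 3-torsion points so that $p$ is not an inflection; then $-p$ also lies in $C(\mathbb{R})^0$, which is a divisible circle, so some real $p_4$ satisfies $2p_4 = -p$, making $p_4$ a real associated point of $p$.

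The converse (iii) $\Rightarrow$ (ii) is then direct: if $p$ has four real associated points, then Lemma~\ref{lemma: 2 torsion and our points} identifies the three nontrivial elements of $C[2]$ with the real differences $p_1 - p_4$, $p_2 - p_4$, $p_3 - p_4$. The bulk of the work is thus topological and group-theoretic in nature; the arithmetic on $C[2]$ is already available from the previous section, and everything distils into the dichotomy of $C(\mathbb{R})$ being one circle or two.
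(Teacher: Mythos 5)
Your overall architecture is sound and, for the equivalence of \ref{item: 2-torsion real} and \ref{item: existence of good point} together with the \enquote{moreover} clause, essentially coincides with the paper's: both arguments rest on Lemma~\ref{lemma: 2 torsion and our points}, which makes the four associated points a single coset of $C[2]$, and both produce a point with a real associated point by observing that the relevant map ($q\mapsto -2q$ in the paper, the divisibility of the identity component in your version) hits enough of $C(\mathbb{R})$. Your treatment of \ref{item: connected components} $\Leftrightarrow$ \ref{item: 2-torsion real} is genuinely different: the paper works with an explicit real Weierstrass form $y^2=F(x,1)$ and reads off both the component count and the reality of the $2$-torsion from whether $F$ has one or three real roots, whereas you take the structure of $C(\mathbb{R})$ as a real Lie group as given and simply count $2$-torsion elements ($2$ in $\mathbb{R}/\mathbb{Z}$ versus $4$ in $\mathbb{R}/\mathbb{Z}\times\mathbb{Z}/2\mathbb{Z}$, against $|C[2]|=4$). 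Your version is cleaner and also makes the basepoint-independence in \ref{item: 2-torsion real} transparent via translation by the real class $p_0'-p_0$.

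The one step you should shore up is the dichotomy itself. From \enquote{compact abelian real Lie group of dimension one} you cannot conclude that $C(\mathbb{R})$ is $\mathbb{R}/\mathbb{Z}$ or $\mathbb{R}/\mathbb{Z}\times\mathbb{Z}/2\mathbb{Z}$: such a group is $\mathbb{R}/\mathbb{Z}\times F$ for an \emph{arbitrary} finite abelian $F$, and closed one-dimensional subgroups of the torus $C(\mathbb{C})$ with many components do exist. Capping the number of components at two is exactly the nontrivial content that the paper's Weierstrass computation supplies (equivalently, Harnack's bound for genus one, or the standard classification of $E(\mathbb{R})$ in the literature). Since this dichotomy is the engine of your counting argument for \ref{item: connected components} $\Leftrightarrow$ \ref{item: 2-torsion real}, you must either cite it properly or prove it, e.g.\ by the same real Weierstrass analysis the paper performs; as written, that step is asserted rather than established. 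The same remark applies, more mildly, to your justification of the existence of a real inflection point, which both you and the paper ultimately take as a standard fact.
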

\begin{proof}
	We prove the implications in the following order: first, we prove the equivalence of \ref{item: connected components} and \ref{item: 2-torsion real}, and then that \ref{item: 2-torsion real} and \ref{item: existence of good point} are equivalent. 
	\par The equivalence of \ref{item: connected components} and \ref{item: 2-torsion real} is well-known; we nevertheless include its proof, as we could not find a reference. First, we make the following observation: choose $p_0 \in C$ a real inflection point and let the defining equation of $C$ in Weierstrass form be
	\[ C: Y^2Z=F(X,Z),\]
	where $F$ is homogeneous of degree three and has real coefficients. Dehomogenise this equation at $Z=1$, as the only point with $Z = 0$ is $p_0=[\,0:1:0\,]$ at infinity. Using the group law on $C$, the $2$-torsion points will then be precisely those points $[\, x : y : 1 \,]$ with $y=0$, meaning points $[\, x:0:1 \, ] \in C$ where $F(x,1)=0$.
	Since $C$ is smooth, this polynomial of degree three in $x$ has three distinct solutions $a_1$, $a_2$, $a_3$ in $\mathbb{C}$, and we can write
	\begin{align}
		y^2=F(x,1)=\lambda (x-a_1)(x-a_2)(x-a_3), \label{y^2=F(x,1)}
	\end{align}
	with $\lambda \in \mathbb{R}$. Replacing $x$ with $\tfrac{x}{\sqrt[3]{\lambda}}$, we may assume $\lambda=1$.
	\par We can rephrase the equivalence accordingly and instead prove that $C(\mathbb{R})$ has two connected components in the Euclidean topology if and only if $a_1$, $a_2$, $a_3$ are all real. As $F(x,1)=0$ has real coefficients, there are either three real solutions, or one real solution and two complex conjugate solutions. So we can show the equivalence by making a full classification. 
	\par First, if all solutions are real, up to permutation of the indices, we may assume $a_1 < a_2 < a_3$. Then $C(\mathbb{R}) \setminus \{[\,0:1:0\,]\}$ is precisely the set of points $[\, x:y:1\,]$ where $F(x,1)\geq0$, as then $y^2=F(x,1)$ has a real solution for $y$. Hence, we can describe the set $C(\mathbb{R})$ as
	\[C(\mathbb{R}) = \{\thinspace [\, x:\sqrt{F(x,1)}:1\,] \thinspace | \thinspace a_1 \leq x \leq a_2 \thinspace\} \cup \{\thinspace [\, x:\sqrt{F(x,1)}:1\, ] \thinspace | \thinspace a_3 \leq x < \infty \thinspace\} \cup \{[\, 0:1:0\, ]\}.\]
	The two components are therefore the first set and the union of the latter two sets.
	\par Assume now that, up to permutation of the indices, $a_1$ is real and $a_3=\overline{a_2}$. Since $(x-a_2)(x-\overline{a_2})$ is a polynomial with leading coefficient equal to one, we have $(x-a_2)(x-\overline{a_2})>0$ for all $x \in \mathbb{R}$. So, we can write $C(\mathbb{R})$ as the set
	\[C(\mathbb{R}) =  \{\thinspace [\, x:\sqrt{F(x,1)}:1\, ] \thinspace | \thinspace a_1 \leq x < \infty \thinspace\} \cup \{[\, 0:1:0\, ]\},\]
	which is comprised of one component.
	\par Now for the equivalence of \ref{item: 2-torsion real} and \ref{item: existence of good point}, call $p_1, \ldots, p_4$ the associated points of $p$. We recall that due to Lemma~\ref{lemma: 2 torsion and our points}, the group $C[2] \subset C(\mathbb{C})$ of (complex) $2$-torsion points is equal to 
	\[C[2]=\{ 0, \, p_1-p_4, \, p_2-p_4, \, p_3-p_4 \}.\]
	So, if the four associated points $p_1, \ldots, p_4$ of some real $p \in C$ are real, then the $2$-torsion group $C[2]$ contains only real points. This shows that \ref{item: existence of good point} implies \ref{item: 2-torsion real}.
	\par Suppose \ref{item: 2-torsion real} holds and consider the morphism $\varphi: C \rightarrow C$ sending a point $q$ to the second point on $C$ lying on the tangent to $C$ at $q$. Choose a point $p \in \varphi(C(\mathbb{R}))$ which is not an inflection point; this is possible by choosing $p$ as the image of a point other than an inflection point or a $2$-torsion point. So $p$ has at least one real associated point. Suppose without loss of generality that this point is $p_4$. Since $p_i - p_4 \in C[2]$ and $C[2] \subset C(\mathbb{R})$ by \ref{item: 2-torsion real}, the other associated points must also all be real. 
	\par As for the last assertion, if $C[2] \subset C(\mathbb{R})$ and $p$ has a real associated point, then by what just preceded, all its associated points must be real. This wraps up the proof.
\end{proof}

The next lemma introduces an uncountable set which will be crucial in the proof of the existence of suitable points.

\begin{lemma} \label{lemma: the set M}
	If $C[2] \subset C(\mathbb{R})$, then the set 
	\[ M \coloneqq \{ \thinspace p \in C(\mathbb{R}) \thinspace | \thinspace \forall q \in C: \bigl(p+2q=3p_0 \Rightarrow q \in C(\mathbb{R})\bigr) \thinspace \}\]
	is uncountable.
\end{lemma}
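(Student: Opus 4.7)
The plan is to rewrite $M$ in terms of the doubling map on $C(\mathbb{R})$ and then invoke the topological group structure of $C(\mathbb{R})$ to conclude that the resulting image is uncountable.

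First I would translate the defining condition into the group law on $(C,p_0)$. Since $p_0$ is an inflection point and serves as the neutral element, the equation $p+2q=3p_0$ in $\operatorname{Pic}(C)$ is equivalent to $2q=-p$ under the identification $C\cong\operatorname{Pic}^0(C)$. For fixed $p$, the four points $q$ solving $2q=-p$ differ by elements of $C[2]$; since $C[2]\subseteq C(\mathbb{R})$ by hypothesis, having one real solution forces all four to be real. Consequently
\[ M \;=\; \{\, p \in C(\mathbb{R}) : -p \in [2](C(\mathbb{R})) \,\} \;=\; -\,[2]\bigl(C(\mathbb{R})\bigr), \]
where $[2]\colon C \to C$ denotes the doubling map. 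Since negation is a bijection of $C(\mathbb{R})$, it suffices to show that $[2](C(\mathbb{R}))$ is uncountable.

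Next I would use the topological group structure. By the equivalence of (i) and (ii) in Lemma~\ref{lemma: equivalent statements on real points of C}, the assumption $C[2]\subseteq C(\mathbb{R})$ forces $C(\mathbb{R})$ to have two connected components in the Euclidean topology. As a compact real Lie group of dimension one with two components, $C(\mathbb{R})$ is isomorphic as a topological group to $S^1\times\mathbb{Z}/2\mathbb{Z}$, the identity component being the component containing $p_0$. The doubling map acts on this product as the (surjective, two-to-one) doubling on the circle factor and as the zero map on $\mathbb{Z}/2\mathbb{Z}$, so its image is precisely the identity component, which is homeomorphic to $S^1$ and hence uncountable.

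The only point requiring some care is the topological identification $C(\mathbb{R}) \cong S^1 \times \mathbb{Z}/2\mathbb{Z}$. This is standard for a smooth real elliptic curve whose real locus has two components and can in fact be read off from the Weierstrass parametrisation established in the proof of Lemma~\ref{lemma: equivalent statements on real points of C}; accordingly I do not anticipate a genuine obstacle. A more hands-on alternative would be to parametrise the identity component of $C(\mathbb{R})$ as a real analytic one-manifold and to note that the restriction of $[2]$ to that component has finite fibres, so its image is uncountable; either route produces uncountably many elements of $M$.
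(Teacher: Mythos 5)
Your argument is correct and follows essentially the same route as the paper: both identify $M$ as the image of $C(\mathbb{R})$ under the multiplication-by-$(-2)$ map, using $C[2]\subseteq C(\mathbb{R})$ to show that one real solution of $2q=-p$ forces all four to be real. The paper concludes uncountability directly from the finiteness of the fibres of this morphism (the alternative you mention at the end), rather than via the identification $C(\mathbb{R})\cong S^1\times\mathbb{Z}/2\mathbb{Z}$, but this difference is cosmetic.
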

\begin{proof}
	Consider the morphism $\varphi: C \rightarrow C$ from the previous proof sending a point $q \in C$ to the second point on $C$ lying on the tangent to $C$ at $q$. Restricting it to $C(\mathbb{R})$, we obtain a continuous map $\overline{\varphi}: C(\mathbb{R}) \rightarrow C(\mathbb{R})$.
	\par The set $M$ is equal to $\overline{\varphi}(C(\mathbb{R}))$. Indeed, since all $2$-torsion elements are real, $p$ having any real associated point is equivalent to $p$ having only real associated points by Lemma~\ref{lemma: equivalent statements on real points of C}. The morphism $\varphi$ corresponds to multiplication by minus two when considering the group structure on $C$. Therefore, $\varphi$ has finite fibres, which implies the claim.
\end{proof}

Equipped with these lemmas, we can verify the existence of suitable points.

\begin{proposition} \label{proposition: existence of the good points}
	For any real smooth cubic curve $C \subset \mathbb{P}^2$ whose real points $C(\mathbb{R})$ has two components and for any $r \geq 1$ we can find points $p_{10}, \ldots, p_{r0} \in C$ such that for all $1\leq i \leq r$,
	\begin{enumerate}[leftmargin=*]
		\item the $p_{i0}$'s are independent, and \label{enum: condition special condition}
		\item the $p_{i0}$'s and their associated points are real. \label{enum: condition real points}
	\end{enumerate}
\end{proposition}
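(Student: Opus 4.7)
The plan is to build the points $p_{10}, \ldots, p_{r0}$ one at a time inside the uncountable set $M$ from Lemma~\ref{lemma: the set M}, using a countability argument to enforce $\mathbb{Z}$-linear independence at each step.

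The hypothesis that $C(\mathbb{R})$ has two connected components gives, via Lemma~\ref{lemma: equivalent statements on real points of C}, the inclusion $C[2] \subset C(\mathbb{R})$, so Lemma~\ref{lemma: the set M} applies and the set
\[ M = \{\, p \in C(\mathbb{R}) \mid \text{every associated point of } p \text{ is real} \,\} \]
is uncountable. Fix a real inflection point $p_0 \in C$ as origin (one exists, as in the Weierstrass form argument in the proof of Lemma~\ref{lemma: equivalent statements on real points of C}), making $C$ into an elliptic curve, and identify $\easy{Pic}^0(C) \cong C$ via $p \mapsto p - p_0$.

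I would then proceed by induction on $r$. For the base case $r = 1$, a point $p_{10} \in M$ fails to be independent precisely when $p_{10} - p_0$ is a torsion element of $\easy{Pic}^0(C)$; since the torsion subgroup of the elliptic curve $C$ is countable and $M$ is uncountable, such a $p_{10}$ exists. For the inductive step, assume that $p_{10}, \ldots, p_{k0} \in M$ have already been chosen and are independent. A point $q \in M$ fails to extend the list to an independent one iff there exist integers $n \neq 0$ and $n_1, \ldots, n_k$ with
\[ n(q - p_0) = -\sum_{i=1}^{k} n_i (p_{i0} - p_0); \]
the coefficient on $q$ must be nonzero, for otherwise the inductive hypothesis forces all $n_i = 0$. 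For each fixed tuple $(n, n_1, \ldots, n_k) \in \mathbb{Z}^{k+1}$ with $n \neq 0$, the right-hand side is a single fixed element of $C$, and its preimage under the multiplication-by-$n$ isogeny on the elliptic curve $C$ is finite (of cardinality at most $n^2$). The set of bad candidates is thus a countable union of finite sets, and so countable; since $M$ is uncountable, we may select $p_{(k+1)0} \in M$ avoiding it, which completes the induction and yields \ref{enum: condition special condition} and \ref{enum: condition real points}.

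The only subtlety is the finiteness of the fibers of multiplication-by-$n$ on $C$, which holds because it is a nonconstant morphism of the elliptic curve $C$, hence finite of degree $n^2$. Beyond that, the argument is a straightforward ``uncountably many choices defeat countably many algebraic constraints'' diagonalization, and I do not expect any serious obstacle.
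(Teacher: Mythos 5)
Your proposal is correct and follows essentially the same route as the paper's proof: an induction in which the uncountable set $M$ of Lemma~\ref{lemma: the set M} is depleted by the countable set of points satisfying some nontrivial $\mathbb{Z}$-linear relation with the previously chosen points, using the finiteness of the fibres of multiplication-by-$n$ on $C$. The observation that the coefficient on the new point must be nonzero (by the inductive independence hypothesis) is the same reduction the paper makes implicitly, so there is no gap.
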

\begin{proof}
	We proceed by induction. Consider the uncountable set $M$ which describes the points of $C(\mathbb{R})$ having real associated points, as stated in Lemma~\ref{lemma: the set M}. 
	\par For $r=1$, independence is equivalent  to $p_{10}$ having no torsion. Since by, for example \cite[Cor.~\RomanNumeralCaps{3}.6.4]{silverman2009arithmetic}, the group of (complex) $m$-torsion points is $C[m] \cong \left( \mathbb{Z}/m\mathbb{Z} \right)^2$, the points of $m$-torsion are finite. Thus, the set of torsion points
	\[C_{\text{tors}} \coloneqq \underset{m\geq 1}{\bigcup} C[m]\]
	is countable. Since $M$ is uncountable, we can always choose a point $p_{10}$ from $M \setminus C_{\text{tors}}$ satisfying conditions \ref{enum: condition special condition} and \ref{enum: condition real points}. Note that this also excludes the inflection points, as they are precisely the $3$-torsion points.
	\par We assume that the proposition holds for $r-1 \geq 0$. Suppose we have points $p_{10}, \ldots, p_{r-1,0}$ satisfying \ref{enum: condition special condition} and  \ref{enum: condition real points}. The set
	\[ M_r \coloneqq M \setminus \{\thinspace p_{ij} \thinspace | \thinspace 1 \leq i \leq r-1, 1 \leq j \leq 4 \thinspace\}\]
	is still uncountable, as we only remove $5(r-1)$ points.
	\par Now, we would like to remove the points $q \in M_r$ for which there exist $(m, n_1, \ldots, n_{r-1}) \in \mathbb{Z}^r\setminus \{(0, \ldots, 0)\}$ such that
	\begin{align}
		m(q-p_0)+n_1(p_{10}-p_0)+\cdots +n_{r-1}(p_{r-1,0}-p_0)=0. \label{eq: bad condition}
	\end{align}
	Consider the morphism
	\[ \zeta_m : C \rightarrow C, \, q \mapsto mq, \]
	where $mq$ is the unique point corresponding to $m(q-p_0) \in \easy{Pic}^0(C)$. This morphism has finite fibres, which we can see by working on the complex torus analytically isomorphic to $C$ and using $C[m] \cong (\mathbb{Z}/m\mathbb{Z})^2$. Denote by 
	\[B_m \coloneqq \zeta_m^{-1}\{\, \text{point corresponding to }-n_1(p_{10}-p_0)-\cdots -n_{r-1}(p_{r-1,0}-p_0) \thinspace | \thinspace (n_1, \ldots, n_{r-1} ) \in \mathbb{Z}^r \,\},\]
	a countable subset of $C$. Therefore, 
	\[B \coloneqq \underset{m \in \mathbb{Z}}{\bigcup} B_m\]
	is also countable.
	\par Then $M_r \setminus B$ is still uncountably infinite, and we may choose $p_{r0} \in M_r \setminus B$. This proves the induction and therefore the proposition.
\end{proof}

\section{Map from $\easy{Pic}(X)$ to $\easy{Pic}(C)$} \label{section: map from Pic(X) to Pic(C)}

\par We would like to use the group structure on $C$ to find further conditions on automorphisms of $X$. For this, we take a step back from all the assumptions made so far: let $X$ be a smooth projective surface and let $C,D$ be two curves on $X$ having no common irreducible component. The \textit{intersection multiplicity} of $C$ and $D$ at a point $p \in X$ is defined as
\begin{equation*}
	i_p(C,D) \coloneqq \easy{dim}\left( \mathcal{O}_{X,p}/(f,g)\right),
\end{equation*} 
where $f$ and $g$ are local equations for $C$ and $D$ at $p$.

\par Fix the irreducible curve $C$ and consider the following map for irreducible curves $D \subset X$ which are not equal to $C$:
\begin{equation} \label{eqn: Phi map on irreducible curves}
	[D] \mapsto \underset{p \in C \cap D}{\sum} i_p(C, D)p.
\end{equation}
Considering divisors in $\easy{Pic}(X)$ as line bundles, we see that the above map extends to a unique $\mathbb{Z}$-linear map $\Phi: \easy{Pic}(X) \rightarrow \easy{Pic}(C)$.
\par Suppose now that $\pi: X \rightarrow \mathbb{P}^2$ is the blow-up in $5r \geq 10$ points $p_{10}, \ldots, p_{r4}$ lying on a smooth cubic curve $C$ chosen such that the $p_{i0}$'s are independent, and $p_{i1}, \ldots, p_{i4}$ are associated points of $p_{i0}$.
Then we can write the map as
\begin{align} \label{Phi}
	\Phi: \easy{Pic}(X) &\rightarrow \easy{Pic}(\widetilde{C}), \\ d[L]- \underset{i,j}{\sum} m_{ij} [E_{ij}] &\mapsto 3dp_0 - \underset{i,j}{\sum} m_{ij}p_{ij}, \nonumber
\end{align} 
where $\widetilde{C}$ is the strict transform of the cubic curve, $L$ is the strict transform of a general line in $\mathbb{P}^2$, and the $E_{ij} \coloneqq \pi^{-1}(p_{ij})$ are the exceptional curves. As observed in Lemma~\ref{lemma: auto of X restricts to auto of C}, any automorphism $g \in \easy{Aut}(X)$ restricts to an automorphism of $\widetilde{C}$, and we therefore obtain a commutative diagram
	\begin{center}
	\begin{tikzcd}
		\easy{Pic}(X) \arrow[r, "g^{\ast}"] \arrow[d, "\Phi"']  &  \easy{Pic}(X) \arrow[d, "\Phi"] \\
		\easy{Pic}(\widetilde{C}) \arrow[r, "(g|_{\widetilde{C}})^{\ast}"]  & \easy{Pic}(\widetilde{C}).
	\end{tikzcd}
\end{center}
Note that due to $\widetilde{C} \cong C$, we can replace $\easy{Pic}(\widetilde{C})$ by $\easy{Pic}(C)$.

\par Furthermore, we may connect $\Phi(\easy{Pic}(X))$ with what we found in Proposition~\ref{prop: all about relations on points}:

\begin{lemma} \label{lemma: Phi(Pic(X)) is equal to gp generated by points}
	Given $r\geq 2$ independent points $p_{10}, \ldots, p_{r0}$ lying on a smooth cubic curve $C\subset \mathbb{P}^2$, blow up these points and their associated points to obtain a rational surface $X$. Consider the map $\Phi$ given in \eqref{Phi}. Then $\Phi(\easy{Pic}(X))=\langle \, p_{10}, \ldots, p_{r4} \, \rangle$.
\end{lemma}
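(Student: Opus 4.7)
The plan is a straightforward double-inclusion argument, since both sides are explicit as subgroups of $\easy{Pic}(C)$.

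First, I note that $\easy{Pic}(X) = \mathbb{Z}[L] \oplus \bigoplus_{i,j}\mathbb{Z}[E_{ij}]$, so the image $\Phi(\easy{Pic}(X))$ is generated, as a subgroup of $\easy{Pic}(C)$, by $\Phi([L]) = 3p_0$ and $\Phi([E_{ij}]) = -p_{ij}$ for $1 \leq i \leq r$ and $0 \leq j \leq 4$. In particular,
\[ \Phi(\easy{Pic}(X)) = \langle\, 3p_0,\ p_{10},\ldots,p_{r4}\,\rangle. \]

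For the inclusion $\supseteq$, the relation $p_{ij} = \Phi(-[E_{ij}])$ shows each generator of $\langle p_{10},\ldots,p_{r4}\rangle$ lies in the image.

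For the inclusion $\subseteq$, it suffices to show that $3p_0 \in \langle p_{10},\ldots,p_{r4}\rangle$. This is immediate from Proposition~\ref{prop: all about relations on points}\,\ref{item: relations on points}, which gives $3p_0 = p_{10} + 2p_{14}$. Combining the two inclusions concludes the proof.

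I do not foresee any real obstacle: the hypothesis $r \geq 2$ plays no role beyond ensuring that $p_{14}$ is among the blown-up points so that the relation $3p_0 = p_{10} + 2p_{14}$ supplies $3p_0$; even $r \geq 1$ would suffice. The only subtle point worth stating in the write-up is the explicit decomposition $\Phi([L]) = 3p_0$ and $\Phi([E_{ij}]) = -p_{ij}$, which is just the definition of $\Phi$ given in \eqref{Phi}.
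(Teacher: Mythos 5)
Your proof is correct and follows exactly the same route as the paper: identify $\Phi(\operatorname{Pic}(X))=\langle\,3p_0,p_{10},\ldots,p_{r4}\,\rangle$ and then absorb $3p_0$ via the relation $3p_0=p_{10}+2p_{14}$ from Proposition~\ref{prop: all about relations on points}\,\ref{item: relations on points}. Your side remark that $r\geq 1$ would already suffice is also accurate.
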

\begin{proof}
	Since $\Phi(\easy{Pic}(X))=\langle \, 3p_0, p_{10}, \ldots, p_{r4} \, \rangle$, we need to prove that $3p_0 \in \langle \, p_{10}, \ldots, p_{r4} \, \rangle$. This is due to $3p_0=p_{10}+2p_{14}$, as in Proposition~\ref{prop: all about relations on points}, \ref{item: relations on points}.
\end{proof}

\section{Description of induced automorphisms on $\easy{Pic}(C)$} \label{section: description of induced automorphism in Pic(C)}

\par Before reaping the benefits of the set-up of our points, it is worth analysing how an automorphism of a smooth cubic curve induces an automorphism of the Picard group of that curve.

\begin{lemma} \label{lemma: automorphism of C induces automorphism of Pic(C)}
	Fix a smooth cubic curve $C\subset \mathbb{P}^2$ with $\easy{Aut}_{\easy{gp}}(C) \cong \mathbb{Z}/2 \mathbb{Z}$, and choose an inflection point $p_0 \in C$. Any automorphism $h$ of $C$ may be given by 
	\begin{align*}
		h:C &\rightarrow C, \\
		p &\mapsto ap+b,
	\end{align*}
	with $a = \pm 1$ and $b \in C$. Then the induced automorphism $h^{\ast}$ of $\easy{Pic}(C)$ is given by
	\begin{align*}
		h^{\ast} : \easy{Pic}(C) & \rightarrow \easy{Pic}(C), \\
		D &\mapsto aD+\easy{deg}(D)B,
	\end{align*}
	where $B=b-p_0$ if $a=1$, or $B=b+p_0$ if $a=-1$, for a fixed inflection point $p_0$.
\end{lemma}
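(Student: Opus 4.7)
The plan is first to identify the form of any automorphism $h$ of $C$, and then to compute the induced action on $\easy{Pic}(C)$ by first treating the class of a single point and extending $\mathbb{Z}$-linearly.

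To describe $h$, I would set $b \coloneqq h(p_0) \in C$ and consider $\phi \coloneqq \tau_{-b} \circ h$, where $\tau_{c}$ denotes translation by $c$ on $C$. By construction $\phi(p_0) = p_0$, so $\phi$ is an automorphism of the pointed curve $(C, p_0)$, hence lies in $\easy{Aut}_{\easy{gp}}(C)$. Since $\easy{Aut}_{\easy{gp}}(C) \cong \mathbb{Z}/2\mathbb{Z}$ by hypothesis, $\phi(p) = ap$ for some $a \in \{\pm 1\}$, whence $h(p) = ap + b$.

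To compute $h^{\ast}$ on the class of a single point $q$, I would use the standard relations between the group law on $C$ and addition in $\easy{Pic}(C)$: for any points $p_1, p_2 \in C$,
\[ p_1 +_C p_2 = p_1 + p_2 - p_0 \quad \text{and} \quad -_C p_1 = 2p_0 - p_1 \]
as classes in $\easy{Pic}(C)$ (using the paper's convention of writing $p$ for the class of $p$). For $a = 1$ this gives directly $h^{\ast}(q) = q +_C b = q + (b - p_0) = q + B$ with $B = b - p_0$; for $a = -1$, applying the second identity first and then the first yields $h^{\ast}(q) = (-_C q) +_C b = (2p_0 - q) + b - p_0 = -q + (b + p_0) = aq + B$ with $B = b + p_0$. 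In both cases $h^{\ast}(q) = aq + B$, so extending $\mathbb{Z}$-linearly to $D = \sum n_i q_i$ yields $h^{\ast}(D) = \sum n_i (aq_i + B) = aD + \easy{deg}(D)\, B$, as required. The main point to watch is the sign/degree bookkeeping that produces the extra $p_0$ in $B$ when $a = -1$: it is forced because negation on $C$ corresponds to $p \mapsto 2p_0 - p$ in $\easy{Pic}(C)$ rather than $p \mapsto -p$, so the degree-one class of the point $-_C q$ picks up an additional $2p_0$ that naive linear extension would miss.
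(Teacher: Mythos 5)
Your proposal is correct and follows essentially the same route as the paper: identify $h$ as $p\mapsto ap+b$ using the hypothesis on $\easy{Aut}_{\easy{gp}}(C)$, translate the group law into $\easy{Pic}(C)$ via $p\mapsto p-p_0$, compute the class of $h(q)$ for a single point (which is where the extra $p_0$ for $a=-1$ appears), and extend $\mathbb{Z}$-linearly. The paper merely organises the point computation as two separate cases (pure inversion, pure translation) and composes them, whereas you do it in one step with the identities $p_1+_Cp_2=p_1+p_2-p_0$ and $-_Cp_1=2p_0-p_1$; the content is the same.
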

\begin{proof}
	The assumption on $\easy{Aut}_{\easy{gp}}(C)$ implies that every automorphism $h$ of $C$ is of the form $p \mapsto ap + b$ for some $a \in \{\pm 1\}$ and some $b \in C$. Such an automorphism $h$ induces an automorphism
	\begin{align*}
		h_{\ast}:\easy{Pic}^0(C) &\rightarrow \easy{Pic}^0(C), \\
		(p-p_0) &\mapsto a(p-p_0)+(b-p_0),
	\end{align*}
	\par We first consider $a=-1$ and $(b-p_0)=0$. Then $(p-p_0)$ is mapped to $h_{\ast}(p-p_0)=(h(p)-p_0)$ which satisfies
	\[(p-p_0)+(h(p)-p_0)=0.\]
	This implies, in $\easy{Pic}(C)$, that $h(p)=-p+2p_0$, and therefore that the automorphism $h:C \rightarrow C$, $p \mapsto -p$ induces the automorphism
	\begin{align*}
		h^{\ast}:\easy{Pic}(C) &\rightarrow \easy{Pic}(C), \\
		D &\mapsto -D+2\easy{deg}(D)p_0.
	\end{align*}
	\par We can analyse a pure translation, meaning $(p-p_0) \mapsto (p-p_0)+(b-p_0)$, in the same manner; we determine the point $h(p)$ for which $(h(p)-p_0)=h_{\ast}(p-p_0)=(p-p_0)+(b-p_0)$, which must be 
	\[h(p)=p+b-p_0.\]
	We therefore obtain the induced automorphism
	\begin{align*}
		h^{\ast}:\easy{Pic}(C) &\rightarrow \easy{Pic}(C), \\
		D &\mapsto D+\easy{deg}(D)(b-p_0).
	\end{align*}
	This covers $a=1$. We can obtain the case $a=-1$ by composing the inversion with a translation.
\end{proof}

Note that the condition $\easy{Aut}_{\easy{gp}}(C) \cong \mathbb{Z}/2\mathbb{Z}$ is satisfied for a general smooth cubic curve $C \subset \mathbb{P}^2$. 
\par The following lemma is a technical result which will be useful in the proof of the theorem thereafter. Its proof is dependent on condition \eqref{cond: first condition of stabiliser}, meaning on automorphisms preserving the intersection form.

\begin{lemma} \label{lemma: main idea of main proof}
	Given an automorphism $g \in \easy{Aut}(X)$, denote the corresponding matrix by $G$ and write $G_0$ for the first column and $G_{ij}$ for the column corresponding to the image of $[E_{ij}]$. For any row vector $v \coloneqq (a_0, a_{10}, \ldots, a_{r4}) \in \mathbb{Z}^{5r+1}$, we have
	\begin{align} \label{eq: special claim, 1}
		a_0[L]-\underset{i,j}{\sum} a_{ij}[E_{ij}]=g^{\ast}\bigl((v \cdot G_0)[L]-\underset{i,j}{\sum} (v \cdot G_{ij})[E_{ij}]\bigr),
	\end{align}
	and therefore,
	\begin{align} \label{eq: special claim, 2}
		a_0^2-\underset{i,j}{\sum} a_{ij}^2 =(v\cdot G_0)^2-\underset{i,j}{\sum} (v \cdot G_{ij})^2. 
	\end{align}
\end{lemma}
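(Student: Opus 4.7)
\par The plan is to identify the preimage of $a_0[L]-\sum_{i,j}a_{ij}[E_{ij}]$ under $g^{\ast}$ directly in terms of intersection numbers, bypassing any explicit matrix inversion. Condition \eqref{cond: first condition of stabiliser} will enter as the isometry property of $g^{\ast}$ for the intersection form.

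\par First I would record the canonical expansion: since $[L]^2=1$, $[E_{ij}]^2=-1$, and all mixed intersections vanish, every class $D\in\easy{Pic}(X)$ satisfies
\[D=(D\cdot [L])[L]-\underset{i,j}{\sum}(D\cdot [E_{ij}])[E_{ij}].\]
In particular, the coefficients $a_0$ and $a_{ij}$ in $a_0[L]-\sum_{i,j} a_{ij}[E_{ij}]$ are nothing but its intersection numbers with $[L]$ and with the $[E_{ij}]$, respectively.

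\par Next I would apply this expansion to $D\coloneqq (g^{\ast})^{-1}\bigl(a_0[L]-\sum_{i,j} a_{ij}[E_{ij}]\bigr)$. Because $g^{\ast}$ preserves the intersection form by \eqref{cond: first condition of stabiliser}, so does $(g^{\ast})^{-1}$, and one immediately has the adjunction $(g^{\ast})^{-1}(A)\cdot B=A\cdot g^{\ast}(B)$ for every $A,B\in\easy{Pic}(X)$. Applying this with $A=a_0[L]-\sum_{i,j} a_{ij}[E_{ij}]$ gives
\[D\cdot [L]=\bigl(a_0[L]-\underset{i,j}{\sum} a_{ij}[E_{ij}]\bigr)\cdot g^{\ast}[L],\]
and the same identity with $[L]$ on the right replaced by $[E_{ij}]$. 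Expanding $g^{\ast}[L]$ via its coefficient column $G_0$ in the basis $\{[L],[E_{ij}]\}$ and evaluating the intersection pairing using $[L]^2=1$, $[E_{ij}]^2=-1$, the right-hand side collapses at once to $v\cdot G_0$; the analogous computation gives $D\cdot [E_{ij}]=v\cdot G_{ij}$.

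\par Substituting these values back into the canonical expansion yields $D=(v\cdot G_0)[L]-\sum_{i,j}(v\cdot G_{ij})[E_{ij}]$, and applying $g^{\ast}$ to both sides gives \eqref{eq: special claim, 1}. For \eqref{eq: special claim, 2}, I would simply take the self-intersection of both sides of \eqref{eq: special claim, 1}: by \eqref{cond: first condition of stabiliser} these self-intersections coincide, and computing each directly in the basis $\{[L],[E_{ij}]\}$ produces $a_0^2-\sum_{i,j} a_{ij}^2$ on the left and $(v\cdot G_0)^2-\sum_{i,j}(v\cdot G_{ij})^2$ on the right. The only real subtlety throughout is sign bookkeeping: the minus sign in the convention $D=a_0[L]-\sum_{i,j} a_{ij}[E_{ij}]$ conspires with $[E_{ij}]^2=-1$ so that $D\cdot [E_{ij}]=+a_{ij}$ rather than $-a_{ij}$. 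Once this is tracked consistently, both \eqref{eq: special claim, 1} and \eqref{eq: special claim, 2} fall out with no further conceptual obstacle.
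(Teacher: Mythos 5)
Your argument is correct and is essentially the paper's own proof in coordinate-free language: the paper encodes the isometry property \eqref{cond: first condition of stabiliser} as the matrix identity $Q=GQG^{T}$ and computes $Qv^{T}=GQG^{T}v^{T}=G\bigl(v\cdot G_0,\,-v\cdot G_{10},\,\ldots\bigr)^{T}$, which is exactly your adjunction $(g^{\ast})^{-1}(A)\cdot B=A\cdot g^{\ast}(B)$ combined with the expansion of a class by its intersection numbers against the orthogonal basis $[L],[E_{ij}]$. Both routes identify the same preimage under $g^{\ast}$ and then take self-intersections for \eqref{eq: special claim, 2}, and your sign bookkeeping ($D\cdot[E_{ij}]=+a_{ij}$) matches the paper's conventions, so there is nothing to add.
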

\begin{proof}
	Write
	\begin{equation} \label{eq: matrix Q}
		Q \coloneqq \left( \begin{array}{cccc} 
			1 & ~ &  ~ & ~ \\
			~ & -1 & ~ & ~ \\
			~ & ~ & \ddots & ~ \\
			~ & ~ & ~ & -1 
		\end{array} \right) 
	\end{equation}
	for the matrix associated to the intersection form on $\easy{Pic}(X)$, where
	\begin{align*}
		[L]^2&=1, &[E_{ij}]^2&=-1, &[L]\cdot [E_{ij}]&=0, &[E_{ij}]\cdot [E_{k\ell}]&=0 \text{ for } (i,j) \neq (k, \ell).
	\end{align*}
	We can express the fact that $g^{\ast}$ preserves the intersection form --- as stated in \eqref{cond: first condition of stabiliser} --- using $G$ and $Q$, namely,
	\[Q=GQG^T,\]
	where $G^T$ denotes the transpose of $G$.
	Therefore, we have
	\begin{align*}
		Qv^T &=GQG^Tv^T \\
		%&= GQ(vG)^T \\
		&=GQ(v\cdot G_0, v\cdot G_{10}, \ldots, v\cdot G_{r4})^T \\
		&=G(v\cdot G_0, -v\cdot G_{10}, \ldots, -v\cdot G_{r4})^T.
	\end{align*}
	This implies \eqref{eq: special claim, 1}. 	Equation \eqref{eq: special claim, 2} follows from \eqref{eq: special claim, 1} by taking the self-intersection, and using the fact that any automorphism preserves the intersection form.
\end{proof}

The next theorem is right at the intersection of the geometric and arithmetic world, and will prove crucial in securing inequivalency between the $\sigma_1, \ldots, \sigma_r$.

\begin{theorem} \label{thm: a=1, b=0 ... almost}
	Consider a smooth cubic curve $C \subset \mathbb{P}^2$ with $\easy{Aut}_{\easy{gp}}(C) \cong \mathbb{Z}/2\mathbb{Z}$ and let $r\geq 3$. Choose independent points $p_{10}, \ldots, p_{r0} \in C$ and blow them and their associated points up. Call the resulting rational surface $X$. Then, for each $g\in \easy{Aut}(X)$, the restricted map $g|_{\widetilde{C}}$ is a translation by a $2$-torsion element or the identity.  
\end{theorem}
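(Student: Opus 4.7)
The plan is to apply the commutative diagram $\Phi \circ g^{\ast} = h^{\ast} \circ \Phi$ to the canonical class $-K_X$, which is $g^{\ast}$-invariant by condition~\eqref{cond: second condition of stabiliser}, and exploit the rigidity provided by Proposition~\ref{proposition: subgroup of Pic(C) with many points}. By Lemmas~\ref{lemma: auto of X restricts to auto of C} and~\ref{lemma: automorphism of C induces automorphism of Pic(C)}, I can write $h \coloneqq g|_{\widetilde{C}}$ as $p \mapsto ap + b$ with $a \in \{\pm 1\}$, inducing the action $h^{\ast}(D) = aD + \easy{deg}(D)\, B$ on $\easy{Pic}(C)$; the goal is to show that $a = 1$ and $B \in C[2]$. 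First, applying the commutative diagram to any class $[E_{k\ell}]$ yields $B = \Phi(g^{\ast}[E_{k\ell}]) - a\, p_{k\ell}$, which lies in $\langle p_{10}, \ldots, p_{r4}\rangle$ by Lemma~\ref{lemma: Phi(Pic(X)) is equal to gp generated by points}, so Proposition~\ref{proposition: subgroup of Pic(C) with many points} provides a unique expansion $B = \mu p_{10} + \sum_{i=1}^r \nu_i p_{i4} + \epsilon_1 \delta_1 + \epsilon_2 \delta_2$ with $\mu, \nu_i \in \mathbb{Z}$ and $\epsilon_j \in \{0, 1\}$.

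Next I would compute $\Phi(-K_X) = 9 p_0 - \sum_{i,j} p_{ij}$ in this basis. Using Proposition~\ref{prop: all about relations on points}(i) to collapse $\sum_{j=0}^4 p_{ij} = p_{10} + 2 p_{14} + 2 p_{i4}$ for each $i$ (the $\delta_j$-contributions cancel mod $2$) together with $9 p_0 = 3 p_{10} + 6 p_{14}$, one obtains
\[
\Phi(-K_X) = (3-r)\, p_{10} + (4-2r)\, p_{14} - 2 \sum_{i \geq 2} p_{i4},
\]
a class of degree $9 - 5r$. Since $g^{\ast}(-K_X) = -K_X$, the commutative diagram forces $h^{\ast}(\Phi(-K_X)) = \Phi(-K_X)$, which rearranges into
\[
(1-a)\,\Phi(-K_X) = (9-5r)\, B \qquad \text{in } \easy{Pic}(C).
\]

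Both conclusions now drop out by comparing coefficients in the basis of Proposition~\ref{proposition: subgroup of Pic(C) with many points}. If $a = -1$, equating the $p_{i4}$-coefficients for $i \geq 2$ in $2\Phi(-K_X) = (9-5r)\, B$ forces $(5r-9)\, \nu_i = 4$, which admits no integer solution once $r \geq 3$, since then $5r - 9 \geq 6 > 4$; hence $a = 1$. With $a = 1$ the identity degenerates to $(5r-9)\, B = 0$, and since $5r - 9 \neq 0$ and the free $\mathbb{Z}$-summands of Proposition~\ref{proposition: subgroup of Pic(C) with many points} are torsion-free, I conclude $\mu = 0$ and $\nu_i = 0$ for all $i$, leaving $B = \epsilon_1 \delta_1 + \epsilon_2 \delta_2 \in C[2]$, as required. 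The one calculation beyond pure bookkeeping is the basis expansion of $\Phi(-K_X)$; the hypothesis $r \geq 3$ enters precisely to rule out the divisibility $(5r-9) \mid 4$, matching the paper's remark about where the restriction $r \geq 3$ becomes essential.
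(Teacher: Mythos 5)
Your proof is correct, but it takes a genuinely different route from the paper. The paper never applies the canonical-class condition \eqref{cond: second condition of stabiliser} directly to the map $\Phi$; instead it expands $\Phi(g^{\ast}[L])$ and $\Phi(g^{\ast}[E_{k\ell}])$ in the basis of Proposition~\ref{proposition: subgroup of Pic(C) with many points} and then invokes the intersection-form condition \eqref{cond: first condition of stabiliser} through Lemma~\ref{lemma: main idea of main proof}, testing against three hand-picked vectors $v$, $w$, $u_s$ to produce quadratic Diophantine equations such as $n_s\bigl(4a+(5r-9)n_s\bigr)=0$, whose only integer solutions for $r\geq 3$ force $m=n_1=\cdots=n_r=0$; the value $a=1$ is then obtained separately from $[\widetilde{C}]\cdot g^{\ast}([E_{10}])=1$. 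You replace all of this with the single linear identity $(1-a)\Phi(-K_X)=(9-5r)B$ coming from $g^{\ast}(K_X)=K_X$ pushed through the commutative square, and your basis expansion $\Phi(-K_X)=(3-r)p_{10}+(4-2r)p_{14}-2\sum_{i\geq 2}p_{i4}$ checks out (each block $\sum_{j}p_{ij}$ does collapse to $p_{10}+2p_{14}+2p_{i4}$). Reading off the $p_{i4}$-coefficient for $i\geq 2$ kills $a=-1$ because $(5r-9)\mid 4$ fails once $5r-9\geq 6$, and then $(9-5r)B=0$ forces the free part of $B$ to vanish, leaving $B\in C[2]$ --- including the correct handling of the $\delta_j$-components, where only $B\in C[2]$ (not $B=0$) can be concluded. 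Your argument is shorter and avoids the quadratic case analysis entirely; what the paper's heavier machinery buys is a larger system of constraints on the individual columns of $G$ (which is presumably what makes the ``tedious'' $r=2$ case tractable there), whereas your single relation degenerates at $r=2$ for the same numerical reason, $5r-9=1$. Both arguments locate the role of $r\geq 3$ in exactly the same divisibility obstruction.
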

\begin{proof}
	Let $g \in \easy{Aut}(X)$. Then $g$ induces an automorphism of $\easy{Pic}(X)$, which we may write as
	\begin{align*}
		g^{\ast}([L]) &= d[L]-\underset{i,j}{\sum} m_{ij}[E_{ij}], \\
		g^{\ast}([E_{k\ell}]) &= n_{k\ell}[L] - \underset{i,j}{\sum} e_{ij}^{k\ell}[E_{ij}],
	\end{align*}
	where $L$ is the strict transform of a general line not passing through any of the $p_{ij}$'s, and the $E_{ij}$ are the exceptional curves.
	\par As $r\geq 3$, we have $g(\widetilde{C})=\widetilde{C}$ by Lemma~\ref{lemma: auto of X restricts to auto of C}, where $\widetilde{C}$ is the strict transform of $C$ in $X$. Using the induced automorphism $(g|_{\widetilde{C}})^{\ast}$ on $\easy{Pic}(\widetilde{C})\cong \easy{Pic}(C)$, we find, thanks to Lemma~\ref{lemma: automorphism of C induces automorphism of Pic(C)}, elements $a = \pm 1$ and $B \in \easy{Pic}(C)$ such that
	\begin{align}
		3dp_0-\underset{i,j}{\sum} m_{ij}p_{ij}&= 3ap_0+3B, \label{eq: equations for p_0}   \\
		3n_{k\ell}p_0 - \underset{i,j}{\sum} e_{ij}^{k\ell}p_{ij}&=ap_{k\ell}+B. \label{eq: equations for p_ij}
	\end{align}
	The aim is to prove $a=1$ and $B \in C[2]$. Denote the matrix corresponding to $g^{\ast}$ by $G$, with columns $G_0$, $G_{ij}$ according to the basis $[L]$, $[E_{ij}]$.
	\par Recall that by Proposition~\ref{proposition: subgroup of Pic(C) with many points}, we have an isomorphism $\langle \, p_{10}, \ldots, p_{r4} \, \rangle \cong \mathbb{Z}^{r+1}\oplus \left( \mathbb{Z}/2\mathbb{Z}\right)^2$ with basis $p_{10}$, $p_{14}$, $p_{24}, \ldots, p_{r4}$, $\delta_1$, $\delta_2$; and furthermore, thanks to Lemma~\ref{lemma: Phi(Pic(X)) is equal to gp generated by points}, $\Phi(\easy{Pic}(X))=\langle \, p_{10}, \ldots, p_{r4} \, \rangle$.
	\par Writing \eqref{eq: equations for p_0} and \eqref{eq: equations for p_ij} in this basis gives us more information. We would like to write $B$ in this basis as well. This is possible since $(g|_{\widetilde{C}})^{\ast}$ sends $\Phi(\easy{Pic}(X))$ to $\Phi(\easy{Pic}(X))$, and therefore, for any $p \in \langle \, p_{10}, \ldots, p_{r4} \, \rangle$, we have $ap+B \in \langle \, p_{10}, \ldots, p_{r4} \, \rangle$, implying $B \in  \langle \, p_{10}, \ldots, p_{r4} \, \rangle$. Therefore, $B$ can be expressed as $B=(m,n_1, \ldots, n_r, s_1,s_2)$.
	\par We then use the relations in $\langle \, p_{10}, \ldots, p_{r4} \, \rangle$ of Proposition~\ref{prop: all about relations on points}, \ref{item: relations on points}, to write the left hand side of \eqref{eq: equations for p_ij} as
	\begin{align*}
	3n_{k\ell}p_0 - \underset{i,j}{\sum} e_{ij}^{k\ell}p_{ij} = & \; n_{k\ell}(p_{10}+2p_{14})- e_{10}^{k\ell}p_{10} - \underset{i \geq 2}{\sum}e_{i0}^{k\ell}(p_{10}+2p_{14}-2p_{i4})- \underset{i}{\sum}e_{i1}^{k\ell}(\delta_1+p_{i4}) \\
		& \;  -\underset{i}{\sum}e_{i2}^{k\ell}(\delta_2+p_{i4}) - \underset{i}{\sum}e_{i3}^{k\ell}(\delta_1+\delta_2+p_{i4}) - \underset{i}{\sum} e_{i4}^{k\ell}p_{i4}. 
	\end{align*}
	Next, we regroup and collect the terms to obtain
	\begin{align*}	
	3n_{k\ell}p_0 - \underset{i,j}{\sum} e_{ij}^{k\ell}p_{ij} =&\; \big(n_{k\ell}-\underset{i}{\sum} e_{i0}^{k\ell}\big)p_{10} +\big(2n_{k\ell}- \underset{i\geq 2}{\sum} 2e_{i0}^{k\ell}- \underset{j \neq 0}{\sum} e_{1j}^{k\ell}\big)p_{14} \\
		&\; + \big(2e_{20}^{k\ell}-\underset{j \neq 0}{\sum} e_{2j}^{k\ell}\big)p_{24}+ \cdots + \big(2e_{r0}^{k\ell}-\underset{j \neq 0}{\sum}  e_{rj}^{k\ell}\big)p_{r4} \\
		&\; + \big(-\underset{i}{\sum} e_{i1}^{k\ell}-\underset{i}{\sum} e_{i3}^{k\ell}\big)\delta_1 + \big(-\underset{i}{\sum} e_{i2}^{k\ell}-\underset{i}{\sum} e_{i3}^{k\ell}\big)\delta_2.
	\end{align*}
	Therefore,~\eqref{eq: equations for p_ij} can be rewritten as:
	\begin{align}  \label{eq: crucial eq for p_kl}
		\left( \begin{array}{c}
		n_{k\ell}-\underset{i}{\sum} e_{i0}^{k\ell} \\
		2n_{k\ell}- \underset{i\geq 2}{\sum} 2e_{i0}^{k\ell}- \underset{j \neq 0}{\sum} e_{1j}^{k\ell} \\
		2e_{20}^{k\ell}-\underset{j \neq 0}{\sum} e_{2j}^{k\ell} \\
		\vdots \\
		2e_{r0}^{k\ell}-\underset{j \neq 0}{\sum}  e_{rj}^{k\ell} \\
		-\underset{i}{\sum} \left( e_{i1}^{k\ell}+ e_{i3}^{k\ell}\right) \\
		-\underset{i}{\sum} \left( e_{i2}^{k\ell}+ e_{i3}^{k\ell}\right)
	\end{array}
	\right)=a\underline{\ell}_{k\ell} + \left(\begin{array}{c}
		\phantom{\underset{i}{\sum}} \hspace{-0.8em} m \\ \phantom{\underset{i}{\sum}} \hspace{-0.8em} n_1 \\ \phantom{\underset{i}{\sum}} \hspace{-0.8em} n_2 \\ \phantom{\underset{i}{\sum}} \hspace{-0.8em} \vdots \\ \phantom{\underset{i}{\sum}} \hspace{-0.8em} n_r \\ \phantom{\underset{i}{\sum}} \hspace{-0.8em} s_1 \\ \phantom{\underset{i}{\sum}} \hspace{-0.8em} s_2
	\end{array}\right),
	\end{align}
	where $\underline{\ell}_{k\ell}$ is the vector corresponding to $p_{k\ell}$ in the basis $p_{10}$, $p_{14}, \ldots, p_{r4}$, $\delta_1$, $\delta_2$. 
	\par Similarly, for \eqref{eq: equations for p_0}, 
	\begin{align} \label{eq: crucial eq for 3p_0}
		\left( \begin{array}{c}
		d-\underset{i}{\sum} m_{i0} \\
		2d-2 \underset{i \geq 2}{\sum} m_{i0}- \underset{j \neq 0}{\sum} m_{1j} \\
		2m_{20}-\underset{j \neq 0}{\sum} m_{2j} \\
		\vdots \\
		2m_{r0}-\underset{j \neq 0}{\sum}m_{rj} \\
		-\underset{i}{\sum} \left( m_{i1}+ m_{i3}\right) \\
		-\underset{i}{\sum} \left( m_{i2}+ m_{i3}\right)
	\end{array}
	\right)=a\underline{\ell}_{0} + 3\left(\begin{array}{c}
		\phantom{\underset{i}{\sum}} \hspace{-0.8em} m \\ \phantom{\underset{i}{\sum}} \hspace{-0.8em} n_1 \\ \phantom{\underset{i}{\sum}} \hspace{-0.8em} n_2 \\ \phantom{\underset{i}{\sum}} \hspace{-0.8em} \vdots \\ \phantom{\underset{i}{\sum}} \hspace{-0.8em} n_r \\ \phantom{\underset{i}{\sum}} \hspace{-0.8em} s_1 \\ \phantom{\underset{i}{\sum}} \hspace{-0.8em} s_2
	\end{array}\right)=\left(\begin{array}{c}
	\phantom{\underset{i}{\sum}} \hspace{-0.8em} a+3m \\
	\phantom{\underset{i}{\sum}} \hspace{-0.8em} 2a+3n_1 \\
	\phantom{\underset{i}{\sum}} \hspace{-0.8em} 3n_2 \\
	\phantom{\underset{i}{\sum}} \hspace{-0.8em} \vdots \\
	\phantom{\underset{i}{\sum}} \hspace{-0.8em} 3n_r \\
	\phantom{\underset{i}{\sum}} \hspace{-0.8em} 3s_1 \\
	\phantom{\underset{i}{\sum}} \hspace{-0.8em} 3s_2
\end{array} \right),
	\end{align}
	and here, $\underline{\ell}_0$ is the vector corresponding to $3p_0=p_{10}+2p_{14}$.
	\par We now use Lemma~\ref{lemma: main idea of main proof}: to show that $m=0$, consider the row vector $v=(v_0, v_{10}, \ldots, v_{r4})$ with entries in the basis $3p_0$, $p_{ij}$ for $1\leq i \leq r$, $0 \leq j \leq 4$ given by 
	\begin{align*}
		v_0 \coloneqq 1, \quad v_{ij} \coloneqq \begin{cases}
			1 & j=0, \\
			0 & j \neq 0.
		\end{cases}
	\end{align*}
	Then, applying~\eqref{eq: special claim, 2} of Lemma~\ref{lemma: main idea of main proof}, we obtain
	\[ 1-r=(v\cdot G_0)^2-\underset{k,\ell}{\sum} (v\cdot G_{k\ell})^2. \]
	Note that 
	\begin{align*}
		v\cdot G_0 &=d-\underset{i}{\sum} m_{i0} \overset{\eqref{eq: crucial eq for 3p_0}}{=}a+3m, \\
		v \cdot G_{k\ell} &= n_{k\ell}-\underset{i}{\sum} e_{i0}^{k\ell} \overset{\eqref{eq: crucial eq for p_kl}}{=} \begin{cases}
			m & \ell \neq 0, \\
			a+m & \ell = 0.
		\end{cases}
	\end{align*}
	Hence, we get
	\begin{align*}
		1-r&=(a+3m)^2-r(a+m)^2-4rm^2 \\
		&= (1-r)a^2+2(3-r)am+(9-5r)m^2.
	\end{align*}
	As $a^2=1$, we can deduce
	\begin{equation} \label{eq: for introduction, 1}
		m\bigl(2(3-r)a+(9-5r)m\bigr)=0,
	\end{equation}
	and therefore $m=0$ or $2(3-r)a+(9-5r)m=0$.
	\par If $r= 3$, then $2(3-r)a+(9-5r)m=0$ has the solution $m=0$, as desired. If $r \geq 4$, we note that $0<|m|= \tfrac{2(r-3)}{5r-9}<1$, so we cannot find another integer solution. Therefore, for any $r\geq 3$, we found $m=0$.
	\par As for $n_1$, we follow the same structure of argument as for $m$, but with the vector $w$ given by
	\begin{align*}
		w_0 \coloneqq 2, \quad w_{ij} \coloneqq \begin{cases}
			1 & i=1, j \neq 0, \\
			2 & i\geq 2, j=0, \\
			0 & \text{else.}
		\end{cases}
	\end{align*}
	Observing
	\begin{align*}
		w \cdot G_0 &= 2d-2 \underset{i \geq 2}{\sum} m_{i0}- \underset{j \neq 0}{\sum} m_{1j} \overset{\eqref{eq: crucial eq for 3p_0}}{=} 2a+3n_1, \\
		w \cdot G_{k\ell} &= 2n_{k\ell}- \underset{i\geq 2}{\sum} 2e_{i0}^{k\ell}- \underset{j \neq 0}{\sum} e_{1j}^{k\ell} \overset{\eqref{eq: crucial eq for p_kl}}{=} \begin{cases}
			  a+n_1 & k=1, \ell \neq 0, \\
			  2a+n_1 & k\geq 2, \ell = 0, \\
			  n_1 & \text{else},
		\end{cases}
	\end{align*}
	we can once more use Lemma~\ref{lemma: main idea of main proof} to find
	\begin{align*}
		4-4-4(r-1)&= (2a+3n_1)^2-4(a+n_1)^2-(r-1)(2a+n_1)^2-(4r-3)n_1^2 \\
						&=	-4a^2(r-1)-4(r-2)an_1+(9-5r)n_1^2.
	\end{align*} 
	Again using $a^2=1$, we see that
	\[n_1\bigl(-4(r-2)a+(9-5r)n_1\bigr)=0.\] 
	The second factor in the above equation cannot have an integer solution for $n_1$ if $r \geq 3$, as in this case, $0<|n_1|=\tfrac{4(r-2)}{5r-9}<1$. Therefore, $n_1=0$.
	\par Next, we analyse $n_s$ for $2 \leq s \leq r$. We choose $u_s$ to be the vector with entries
	\begin{align*}
		(u_s)_0 \coloneqq 0, \quad  (u_s)_{ij} \coloneqq \begin{cases}
			-2 & i=s, j=0,\\
			1 & i=s, j \neq 0,\\
			0 & \text{else}.
		\end{cases}
	\end{align*}
	Once again, we apply Lemma~\ref{lemma: main idea of main proof} using
	\begin{align*}
		u_s \cdot G_0 &= 2m_{s0}-\underset{j \neq 0}{\sum}m_{sj} \overset{\eqref{eq: crucial eq for 3p_0}}{=} 3n_s, \\
		u_s \cdot G_{k\ell} &= 2e_{s0}^{k\ell}-\underset{j \neq 0}{\sum} e_{sj}^{k\ell} \overset{\eqref{eq: crucial eq for p_kl}}{=} \begin{cases}
			-2a+n_s & k=s, \ell=0, \\
			a+n_s & k=s, \ell \neq 0\\
			n_s & \text{else}. 
		\end{cases}
	\end{align*}
	It implies
	\begin{align*}
		-8 &= 9n_s^2-5(r-1)n_s^2-4(a+n_s)^2-(-2a+n_s)^2 \\
		&= -8a^2-4an_s-(5r-9)n_s^2,
	\end{align*} 
	and with $a^2=1$, we get
	\begin{equation} \label{eq: for introduction, 2}n_s\bigl(4a+(5r-9)n_s\bigr)=0.\end{equation}
	One solution is $n_s=0$, so we need to determine the possible solutions of $4a+(5r-9)n_s=0$. Any such solution satisfies $0<|n_s|=\tfrac{4}{5r-9}<1$ if $r\geq 3$. Therefore, the only integer solution is $n_s=0$.
	\par Since $m=n_1=\ldots = n_r=0$, we find $B=s_1\delta_1 + s_2\delta_2 \in C[2]$. Furthermore, fixing $k=1$, $\ell=0$ and summing up the first $r+1$ entries of \eqref{eq: crucial eq for p_kl}, we find, as $\underline{\ell}_{10}=(1,0,\ldots, 0)^T$,
	\[3n_{10}-\sum e_{ij}^{10}=a+m+n_1 + \cdots + n_r = a.\]
	Thanks to Lemma~\ref{lemma: auto of X restricts to auto of C}, we get $g(\widetilde{C})=\widetilde{C}$; therefore, using that $g^{\ast}$ preserves the intersection form, we calculate
	\[3n_{10}-\sum e_{ij}^{10}=[\widetilde{C}]\cdot g^{\ast}([E_{10}])=g^{\ast}([\widetilde{C}])\cdot g^{\ast}([E_{10}])=[\widetilde{C}] \cdot [E_{10}]=1.\] 
	We deduce $a=1$, which finishes the proof.
\end{proof}

\section{Equations on the coefficients} \label{section: equations on the coefficients}

\par We saw in Theorem~\ref{thm: a=1, b=0 ... almost} that we can use the information given by the restriction of an automorphism of $X$ to an automorphism of the smooth cubic curve $C$. Using this, we can determine conditions on the coefficients:

\begin{proposition}\label{prop: coefficients if a=1,b=0}
	Consider the blow-up $X$ of $\mathbb{P}^2$ in the points $p_{ij}$ lying on a smooth cubic curve $C$, where $p_{10}, \ldots, p_{r0}$ with $r\geq 3$ are independent and if $j \neq 0$, then $p_{ij}$ is associated with $p_{i0}$. Let $g$ be an automorphism of $X$, given by
	\begin{align*}
		g^{\ast}([L]) &= d[L]-\underset{i,j}{\sum} m_{ij}[E_{ij}], \\
		g^{\ast}([E_{k\ell}])  &= n_{k\ell}[L]-\underset{i,j}{\sum} e_{ij}^{k\ell}[E_{ij}],
	\end{align*}
	with $E_{ij}$ the exceptional curves of $X$ and $L$ the strict transform of a line in $\mathbb{P}^2$ not passing through any of the points $p_{ij}$.
	Suppose that the restriction of $g$ to $\widetilde{C}$ is the identity or a translation by a $2$-torsion element. Then, we have the following conditions on the coefficients:
	\begin{align*}
		2e_{ij}^{i0}-1 &= e_{ij}^{i1} + \cdots + e_{ij}^{i4},  & 1 \leq i \leq r, \, j \neq 0.
	\end{align*}
\end{proposition}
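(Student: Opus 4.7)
The strategy is that the claim involves the quantities $e_{ij}^{i0}, e_{ij}^{i1}, \ldots, e_{ij}^{i4}$ with $(i,j)$ fixed as the \emph{subscript}, whereas the derivation of equation \eqref{eq: crucial eq for p_kl} in the proof of Theorem~\ref{thm: a=1, b=0 ... almost} controls analogous sums with an index fixed as the \emph{superscript}. To swap these roles, I plan to apply the same analysis to $g^{-1}$ and then translate back via the intersection form.

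First, I would note that $g^{-1}$ satisfies the same hypothesis: since $g|_{\widetilde{C}}$ is translation by some $B \in C[2]$ (or the identity) and $-B = B$, the restriction $g^{-1}|_{\widetilde{C}}$ is of the same form. Writing $(g^{-1})^*([L]) = \tilde d[L] - \sum \tilde m_{ij}[E_{ij}]$ and $(g^{-1})^*([E_{k\ell}]) = \tilde n_{k\ell}[L] - \sum \tilde e_{ab}^{k\ell}[E_{ab}]$, equation \eqref{eq: crucial eq for p_kl} applies verbatim with $\tilde a = 1$ and $\tilde m = \tilde n_1 = \cdots = \tilde n_r = 0$. Next, \eqref{cond: first condition of stabiliser} gives $G^T Q G = Q$ for $G$ the matrix of $g^*$ and $Q$ as in \eqref{eq: matrix Q}, so $G^{-1} = QG^TQ$. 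Since $Q$ is diagonal with entries $\pm 1$, reading off entries of this identity yields the index-swap
\[\tilde e_{ab}^{cd} = e_{cd}^{ab}, \qquad \tilde n_{cd} = m_{cd}.\]

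For $i \geq 2$, I would apply \eqref{eq: crucial eq for p_kl} to $g^{-1}$ at $(k,\ell) = (i,j)$, $j \neq 0$, and extract the row corresponding to $p_{i4}$. By Proposition~\ref{prop: all about relations on points}, \ref{item: relations on points}, the coefficient of $p_{i4}$ in $\underline{\ell}_{ij}$ equals $1$, so this row reads $2\tilde e_{i0}^{ij} - \sum_{j' \neq 0}\tilde e_{ij'}^{ij} = 1$. Translating via the index-swap produces exactly $2 e_{ij}^{i0} - \sum_{j' \neq 0} e_{ij}^{ij'} = 1$, which is the claim.

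The main obstacle is the case $i = 1$, since the basis $p_{10}, p_{14}, p_{24}, \ldots, p_{r4}, \delta_1, \delta_2$ places $p_{14}$ in the asymmetric row 2 of \eqref{eq: crucial eq for p_kl}, which also involves $\tilde n_{k\ell}$ and $\tilde e_{i0}^{k\ell}$ for $i \geq 2$. I would resolve this by extracting both rows 1 and 2. Applied to $g^{-1}$ at $(k,\ell) = (1,j)$, $j \neq 0$, row 1 forces $\tilde n_{1j} = \sum_i \tilde e_{i0}^{1j}$, i.e., $m_{1j} = \sum_i e_{1j}^{i0}$ after index-swap. Substituting this into the row-2 identity $2\tilde n_{1j} - 2 \sum_{i \geq 2} \tilde e_{i0}^{1j} - \sum_{j' \neq 0} \tilde e_{1j'}^{1j} = 1$ makes the contributions involving $\sum_{i \geq 2} e_{1j}^{i0}$ cancel, leaving exactly $2 e_{1j}^{10} - \sum_{j' \neq 0} e_{1j}^{1j'} = 1$, the claim for $i = 1$.
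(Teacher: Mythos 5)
Your proposal is correct and follows essentially the same route as the paper: the paper likewise passes to $g^{-1}$ via $G^{-1}=QG^TQ$ (which effects exactly your index swap), applies the analogue of \eqref{eq: crucial eq for p_kl} with $a=1$ and $m=n_1=\cdots=n_r=0$, reads off the $p_{i4}$-row for $i\geq 2$, and handles $i=1$ by combining the first two rows just as you do. The only difference is notational: the paper writes the coefficients of $(g^{-1})^{\ast}$ directly in terms of $m_{ij}$ and $e_{ij}^{k\ell}$ rather than introducing tilded quantities and swapping at the end.
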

\begin{proof}
	We work with the inverse $g^{-1}$, for which we know that $G^{-1}=QG^TQ$, with $Q$ as given in \eqref{eq: matrix Q} and $G$ the matrix corresponding to $g$. Therefore, we deduce that
	\begin{align*}
		(g^{-1})^{\ast}([L]) &=d[L]-\underset{k, \ell}{\sum} n_{k\ell}[E_{k\ell}], \\
		(g^{-1})^{\ast}([E_{ij}]) &= m_{ij}[L]- \underset{k, \ell}{\sum} e_{ij}^{k\ell}[E_{k\ell}].
	\end{align*}
	\par Since $(g^{-1})|_{\widetilde{C}}$ is a translation by a $2$-torsion element, we can write, as in the proof of Theorem~\ref{thm: a=1, b=0 ... almost},
		\[\left( \begin{array}{c}
		m_{ij}-\underset{k}{\sum} e_{ij}^{k0} \\
		2m_{ij}- \underset{k\geq 2}{\sum} 2e_{ij}^{k0}- \underset{\ell \neq 0}{\sum} e_{ij}^{1\ell} \\
		2e_{ij}^{20}-\underset{\ell \neq 0}{\sum} e_{ij}^{2\ell} \\
		\vdots \\
		2e_{ij}^{r0}-\underset{\ell \neq 0}{\sum}  e_{ij}^{r\ell} \\
		-\underset{k}{\sum} \left( e_{ij}^{k1}+ e_{ij}^{k3}\right) \\
		-\underset{k}{\sum} \left( e_{ij}^{k2}+ e_{ij}^{k3}\right)
	\end{array}
	\right)=\underline{\ell}_{ij} + \left(\begin{array}{c}
		\phantom{\underset{i}{\sum}} \hspace{-0.8em} 0 \\ \phantom{\underset{i}{\sum}} \hspace{-0.8em} 0 \\ \phantom{\underset{i}{\sum}} \hspace{-0.8em} 0 \\ \phantom{\underset{i}{\sum}} \hspace{-0.8em} \vdots \\ \phantom{\underset{i}{\sum}} \hspace{-0.8em} 0 \\ \phantom{\underset{i}{\sum}} \hspace{-0.8em} s_1 \\ \phantom{\underset{i}{\sum}} \hspace{-0.8em} s_2
	\end{array}\right),\]
	where $\underline{\ell}_{ij}$ is the vector corresponding to $p_{ij}$ in the basis $p_{10}$, $p_{14}, \ldots, p_{r4}$, $\delta_1$, $\delta_2$, and $B=s_1\delta_1+s_2\delta_2$. 
	Consider $i \geq 2$ and $j \neq 0$. Then $\underline{\ell}_{ij}=e_{i+1}+f_{ij}$, where $f_{ij}$ is a vector with zero entries except maybe in the last two spots, meaning $f_{ij}$ corresponds to an element in $C[2]$. From that, we deduce
	\[2e_{ij}^{i0}= e_{ij}^{i1} + \cdots + e_{ij}^{i4}+1.\]
	For $i=1$ and $j \neq 0$, we analyse the first two entries together. Since $\underline{\ell}_{1j}=e_2+f_{1j}$, with again $f_{1j}$ a vector corresponding to an element in $C[2]$, we see that
	\begin{align}
		m_{1j}-\underset{k}{\sum} e_{1j}^{k0} &= 0, \label{eq: first eq for eqs on coeff} \\
		2m_{1j}- \underset{k\geq 2}{\sum} 2e_{1j}^{k0}- \underset{\ell \neq 0}{\sum} e_{1j}^{1\ell} &= 1. \label{eq: second eq for eqs on coeff}
	\end{align}
	We can subtract the double of \eqref{eq: first eq for eqs on coeff} from \eqref{eq: second eq for eqs on coeff}, which implies the desired equation, and hence concludes the proof.
\end{proof}

\section{At least $r$ real forms}\label{section: at least r real forms}

We are finally able to prove that given our setting, the $\sigma_i$ are inequivalent, which is the last step needed to prove Theorem~\ref{MAIN THEOREM}:

\begin{theorem} \label{thm: main theorem, at least r real forms}
	Fix $r\geq 3$ and a real smooth cubic curve $C\subset \mathbb{P}^2$ whose set of real points $C(\mathbb{R})$ has two connected components in the Euclidean topology, and which satisfies $\easy{Aut}_{\easy{gp}}(C) \cong \mathbb{Z}/2\mathbb{Z}$. Consider points $p_{ij} \in C$, $1\leq i \leq r$, $0 \leq j \leq 4$, such that the points are all real, the $p_{ij}$ for a fixed $i$ and $1 \leq j \leq 4$ are associated to $p_{i0}$, and the points $p_{10}, \ldots, p_{r0}$ are independent. The blow-up $X$ of $\mathbb{P}^2$ in the points $p_{ij}$ has automorphisms $\sigma_1, \ldots, \sigma_r$ arising from the cubic involutions centred at $p_{10}, \ldots, p_{r0}$. If $i \neq j$, then the anti-regular automorphisms $\sigma_i\rho$ and $\sigma_j\rho$ are not equivalent, meaning there does not exist an automorphism $\alpha \in \easy{Aut}(X)$ such that $\sigma_i \rho \alpha  = \alpha\sigma_j \rho$.
\end{theorem}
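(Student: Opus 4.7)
The plan is to argue by contradiction. Suppose some $\alpha \in \operatorname{Aut}(X)$ satisfies $\sigma_i \rho \alpha = \alpha \sigma_j \rho$. Because all $p_{k\ell}$ are real and Corollary~\ref{cor: independent implies not collinear} provides four of them with no three collinear, Lemma~\ref{lemma: real automorphisms} gives $\alpha \rho = \rho \alpha$. Substituting and cancelling $\rho$ reduces the hypothesis to $\sigma_i \alpha = \alpha \sigma_j$, equivalently $\sigma_j^* \alpha^* = \alpha^* \sigma_i^*$ on $\operatorname{Pic}(X)$. Applying Theorem~\ref{thm: a=1, b=0 ... almost} to $\alpha$ (here the assumption $r \geq 3$ is used) then forces $\alpha|_{\widetilde{C}}$ to be the identity or a translation by some $t \in C[2]$; write $t - p_0 = s_1 \delta_1 + s_2 \delta_2$.

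The heart of the argument is a parity obstruction on the coefficient of $p_{j4}$ in $\Phi(\alpha^* E_{j4})$. Since $i \neq j$, Lemma~\ref{lemma: action of sigma_i} gives $\sigma_i^* E_{j4} = E_{j4}$, so $\sigma_j^*(\alpha^* E_{j4}) = \alpha^* \sigma_i^* E_{j4} = \alpha^* E_{j4}$, i.e.\ $\alpha^* E_{j4}$ is fixed by $\sigma_j^*$. Writing $\alpha^* E_{j4} = n_{j4} L - \sum e_{k\ell}^{j4} E_{k\ell}$ and imposing this fixedness using the explicit formulas of Lemma~\ref{lemma: action of sigma_i}, a short linear-algebra step forces $e_{j1}^{j4} = e_{j2}^{j4} = e_{j3}^{j4} = e_{j4}^{j4}$ (call this common value $e^*$) together with $n_{j4} = e_{j0}^{j4} + 2 e^*$. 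Now push forward via $\Phi$: using the relations $3 p_0 = p_{10} + 2 p_{14}$ and $p_{j0} = 3 p_0 - 2 p_{j4}$ of Proposition~\ref{prop: all about relations on points}, together with the identity
\[
p_{j1} + p_{j2} + p_{j3} + p_{j4} = 4 p_{j4} \qquad \text{in } \operatorname{Pic}(C),
\]
which follows at once from $p_{jm} - p_{j4} \in C[2]$ via Lemma~\ref{lemma: 2 torsion and our points}, one checks that in the basis of Proposition~\ref{proposition: subgroup of Pic(C) with many points} every contribution to the $p_{j4}$-coefficient of $\Phi(\alpha^* E_{j4})$ is an even integer.

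On the other hand, the commutative diagram of Section~\ref{section: map from Pic(X) to Pic(C)}, combined with Lemma~\ref{lemma: automorphism of C induces automorphism of Pic(C)} applied with $a = 1$ and $B = t - p_0$, gives
\[
\Phi(\alpha^* E_{j4}) = (\alpha|_{\widetilde{C}})^*(p_{j4}) = p_{j4} + (t - p_0) = p_{j4} + s_1 \delta_1 + s_2 \delta_2,
\]
whose $p_{j4}$-coefficient in the same basis is $1$, which is odd. The resulting even-versus-odd contradiction rules out $\alpha$, which is what is to be proved.

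The step I expect to be the main obstacle is verifying the parity claim uniformly in $j$. For $j \geq 2$ the coefficient reduces immediately to $2(e_{j0}^{j4} - 2 e^*)$. When $j = 1$, however, $p_{j4}$ coincides with the basis element $p_{14}$, so one must carefully collect the additional $p_{14}$-contributions from $6 e^* p_0 = 2 e^*(p_{10} + 2 p_{14})$ and from the expansions $p_{k0} = 3 p_0 - 2 p_{k4}$ inside the remainder sum $-\sum_{k \neq 1, \ell} e_{k\ell}^{14} p_{k\ell}$; these combine to $2(e_{10}^{14} - \sum_{k \geq 2} e_{k0}^{14})$, which is still even. The fact that all such terms have an even common factor is precisely the trace, on the Picard side, of the $2$-divisibility imposed by $C[2]$ on the $j$-block through the $\sigma_j^*$-fixedness.
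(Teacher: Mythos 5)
Your argument is correct, and I checked the two computations it rests on: imposing $\sigma_j^{\ast}(\alpha^{\ast}[E_{j4}])=\alpha^{\ast}[E_{j4}]$ coordinatewise via Lemma~\ref{lemma: action of sigma_i} does force $e_{j1}^{j4}=e_{j2}^{j4}=e_{j3}^{j4}=e_{j4}^{j4}=e^{\ast}$ and $n_{j4}=e_{j0}^{j4}+2e^{\ast}$, and the resulting $p_{j4}$-coefficient of $\Phi(\alpha^{\ast}[E_{j4}])$ in the basis of Proposition~\ref{proposition: subgroup of Pic(C) with many points} is $2(e_{j0}^{j4}-2e^{\ast})$ for $j\geq 2$ and $2(e_{10}^{14}-\sum_{k\geq 2}e_{k0}^{14})$ for $j=1$, both even, against the odd coefficient $1$ coming from $(\alpha|_{\widetilde{C}})^{\ast}(p_{j4})=p_{j4}+s_1\delta_1+s_2\delta_2$. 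This is a different mechanism from the paper's: there one compares the $[E_{21}]$-coefficient of $\sigma_1^{\ast}\alpha^{\ast}([L])$ with that of $\alpha^{\ast}\sigma_2^{\ast}([L])$, obtaining $2m_{21}-2e_{21}^{20}=e_{21}^{21}+\cdots+e_{21}^{24}$, and then feeds in the relation $2e_{21}^{20}-1=e_{21}^{21}+\cdots+e_{21}^{24}$ of Proposition~\ref{prop: coefficients if a=1,b=0} (itself extracted from $\Phi$ applied to $\alpha^{-1}$) to get a contradiction modulo $2$. Both proofs ultimately exploit the same source of rigidity --- Theorem~\ref{thm: a=1, b=0 ... almost} confining $\alpha|_{\widetilde{C}}$ to $2$-torsion translations, combined with the $2$-divisibility that $C[2]$ imprints on the $j$-th block of coefficients --- but your version localises everything in the single $\sigma_j^{\ast}$-fixed class $\alpha^{\ast}[E_{j4}]$ and reads off the parity directly through $\Phi$, which lets you bypass Section~\ref{section: equations on the coefficients} entirely; the paper's version instead packages the parity constraint once and for all in Proposition~\ref{prop: coefficients if a=1,b=0}, which is reusable for other coefficient comparisons. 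The only point worth making explicit in a write-up is the one you already flag: the $j=1$ case needs the extra $p_{14}$-contributions from $3n_{14}p_0=n_{14}(p_{10}+2p_{14})$ and from the expansions of the $p_{k0}$, and your bookkeeping there is right.
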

\begin{proof}
	The existence of points $p_{ij}$ which are real is granted by Proposition~\ref{proposition: existence of the good points} and by $C(\mathbb{R})$ having two connected components in the Euclidean topology. Then, thanks to Lemma~\ref{lemma: real automorphisms} and all the points being real, we have $\alpha \rho=\rho \alpha$ for any automorphism $\alpha \in \easy{Aut}(X)$. Therefore, we can show instead that there does not exist $\alpha \in \easy{Aut}(X)$ such that $\sigma_i \alpha  = \alpha\sigma_j$.
	\par We may assume without loss of generality that $i=1$, $j=2$. Suppose by contradiction that there exists some automorphism $\alpha$ such that $\sigma_1 \alpha = \alpha\sigma_2$, where $\alpha$ is given by
	\begin{align*}
		\alpha^{\ast}([L]) &= d[L]-\underset{i,j}{\sum} m_{ij}[E_{ij}], \\
		\alpha^{\ast}([E_{k\ell}]) &= n_{k\ell}[L]-\underset{i,j}{\sum} e_{ij}^{k\ell}[E_{k\ell}],
	\end{align*}
	 where the $E_{ij}$ are the exceptional curves and $L$ is the pullback of a line in $\mathbb{P}^2$ not passing through any of the $p_{ij}$'s. We consider the image of $[L]$ under $\sigma_1^{\ast} \alpha^{\ast}$ and $\alpha^{\ast} \sigma_2^{\ast}$ and compare coefficients; in fact, the coefficient of $[E_{21}]$ will be enough. In these calculations, we will use both the action of $\alpha$ on $\easy{Pic}(X)$ and Lemma~\ref{lemma: action of sigma_i}. Now, on the one hand:
	 \begin{align*}
	 	\sigma_1^{\ast} \alpha^{\ast} ([L]) &= \sigma_1^{\ast}(d[L]-\underset{i,j}{\sum} m_{ij}[E_{ij}]) \\
	 	&= -m_{21}[E_{21}] + \text{ other terms.}
	 \end{align*}
 	On the other hand:
 	\begin{align*}
 		\alpha^{\ast} 	\sigma_2^{\ast} ([L]) &= \alpha^{\ast}(3[L]-2[E_{20}]-[E_{21}]-\cdots-[E_{24}]) \\
 		&= -(3m_{21}-2e^{20}_{21}-e^{21}_{21}-\cdots -e^{24}_{21})[E_{21}] + \text{ other terms.}
 	\end{align*} 
We can now equate the coefficients
\[-(3m_{21}-2e^{20}_{21}-e^{21}_{21}-\cdots -e^{24}_{21})=-m_{21}.\]
To this equation we can, thanks to $\easy{Aut}_{\easy{gp}}(C)\cong \mathbb{Z}/2\mathbb{Z}$ and in turn Theorem~\ref{thm: a=1, b=0 ... almost}, apply Proposition~\ref{prop: coefficients if a=1,b=0} and obtain
\[ 2m_{21}-4e^{20}_{21}+1=0. \]
But this is a contradiction modulo $2$, as the coefficients can only take integer values. This completes the proof.
\end{proof}

The following theorem ascertains the existence of a smooth projective rational surface as stated in Theorem~\ref{MAIN THEOREM}.

\begin{theorem} \label{thm: main theorem rephrased explicitly}
	Consider a real smooth cubic curve $C \subset \mathbb{P}^2$ with $\easy{Aut}_{\easy{gp}}(C)\cong \mathbb{Z}/2\mathbb{Z}$ and whose set of real points $C(\mathbb{R})$ has two connected components in the Euclidean topology. Fix $r\geq 3$. Let $p_{ij}$, $1\leq i \leq r$, $0 \leq j \leq 4$, be points on $C$ satisfying the conditions:
	\begin{enumerate}[leftmargin=*]
		\item The $p_{10}, \ldots, p_{r0}$ are independent, \label{cond: independence}
		\item for a fixed $1 \leq i \leq r$, the points $p_{i1}, \ldots, p_{i4}$ are associated with $p_{i0}$, \label{cond: associated}
		\item and all the $p_{ij}$ are real. \label{cond: realness}
	\end{enumerate}
	Call $X$ the blow-up of $\mathbb{P}^2$ in the points $p_{ij}$. Then $X$ has at least $r$ real forms, namely the ones corresponding to the inequivalent cocycles $\sigma_1, \ldots, \sigma_r$, the lifts of cubic involutions centred at $p_{10}, \ldots, p_{r0}$.
\end{theorem}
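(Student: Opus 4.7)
The plan is to assemble the machinery built up in the previous sections into a short concluding argument; no new calculation is required. First, I would verify that the blow-up $X$ is well-defined and carries a natural real structure. By condition~\ref{cond: realness} all points $p_{ij}$ are real, so Proposition~\ref{prop: blow-up and real structure} (applied iteratively) shows that the standard real structure $\widehat{\rho}$ on $\mathbb{P}^2$ lifts to a real structure $\rho$ on $X$. Condition~\ref{cond: independence} together with Corollary~\ref{cor: independent implies not collinear} ensures that no three of the $p_{ij}$ are collinear, so $X$ is genuinely a blow-up in distinct points in general enough position to apply the results of Section~\ref{section: condition on automorphisms of X}.

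Next, I would establish that each $\sigma_i$ is a cocycle. Because the $p_{i0}$ are independent they are not $3$-torsion, hence not inflection points, so by condition~\ref{cond: associated} the four associated points $p_{i1}, \ldots, p_{i4}$ are precisely the non-central base points of the cubic involution $\widehat{\sigma}_{p_{i0}}$. Proposition~\ref{proposition: real points, cubic involutions commute with anti-regular involution} then gives that $\widehat{\sigma}_{p_{i0}}$ lifts to an automorphism $\sigma_i \in \text{Aut}(X)$ satisfying $\sigma_i \rho = \rho \sigma_i$, which immediately yields $(\sigma_i\rho)^2 = \text{id}_X$, i.e., $\sigma_i \in Z^1(G, \text{Aut}(X))$.

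Then I would invoke Theorem~\ref{thm: main theorem, at least r real forms} — the heavy lifting of the paper — to conclude that $\sigma_1, \ldots, \sigma_r$ are pairwise inequivalent as cocycles: for $i \neq j$ there is no $\alpha \in \text{Aut}(X)$ with $\sigma_i \rho \alpha = \alpha \sigma_j \rho$. Finally, Theorem~\ref{thm: equivalence classes of real strucutres, bijection} identifies equivalence classes of cocycles with equivalence classes of real structures on $X$, and Theorem~\ref{thm: real forms isomorphic if and only if real structures equivalent} identifies the latter with $\mathbb{R}$-isomorphism classes of real forms. The $r$ inequivalent cocycles $\sigma_1, \ldots, \sigma_r$ therefore produce $r$ pairwise non-isomorphic real forms of $X$, proving the theorem.

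There is no main obstacle at this stage: every ingredient — existence of suitable points (Proposition~\ref{proposition: existence of the good points}), cocycle property (Proposition~\ref{proposition: real points, cubic involutions commute with anti-regular involution}), pairwise inequivalence (Theorem~\ref{thm: main theorem, at least r real forms}), and the cocycle/real-form dictionary (Theorems~\ref{thm: real forms isomorphic if and only if real structures equivalent} and~\ref{thm: equivalence classes of real strucutres, bijection}) — has already been proved. The proof is essentially a two-line citation of these results, once one checks that the hypotheses imposed on $C$ and on the $p_{ij}$ are exactly those required by each invoked statement.
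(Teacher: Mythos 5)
Your proposal is correct and follows essentially the same route as the paper: both assemble Proposition~\ref{proposition: real points, cubic involutions commute with anti-regular involution} (cocycle property), Theorem~\ref{thm: main theorem, at least r real forms} (pairwise inequivalence), and the dictionary given by Theorems~\ref{thm: equivalence classes of real strucutres, bijection} and~\ref{thm: real forms isomorphic if and only if real structures equivalent}. Your additional checks (noncollinearity via Corollary~\ref{cor: independent implies not collinear}, the lift of $\widehat{\rho}$ via Proposition~\ref{prop: blow-up and real structure}) are harmless elaborations of hypotheses the paper treats implicitly.
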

\begin{proof}
	\par The existence of points satisfying \ref{cond: independence}, \ref{cond: associated} and \ref{cond: realness} is given by Proposition~\ref{proposition: existence of the good points}. We can therefore consider the rational surface $X$ --- the blow-up of $\mathbb{P}^2$ in the points $p_{ij}$ --- and automorphisms $\sigma_1, \ldots, \sigma_r$ on $X$, which are cocycles by Proposition~\ref{proposition: real points, cubic involutions commute with anti-regular involution}.
	\par These automorphisms $\sigma_1, \ldots, \sigma_r$ are pairwise inequivalent by Theorem~\ref{thm: main theorem, at least r real forms}. The first cohomology set $H^1(\easy{Gal}(\mathbb{C}/\mathbb{R}), \easy{Aut}(X))$ therefore contains at least the equivalence classes of these $r$ cocycles. From this, we find by Theorem~\ref{thm: equivalence classes of real strucutres, bijection} that the rational surface $X$ we constructed has at least $r$ real structures, and therefore $r$ real forms, by Theorem~\ref{thm: real forms isomorphic if and only if real structures equivalent}, concluding the proof.
\end{proof}

\small{

\bibliographystyle{alpha}
%\bibliography{refs}

}
\noindent

\bigskip\noindent
Anna Bot ({\tt annakatharina.bot@unibas.ch}),\\
Department of Mathematics and Computer Science, University of Basel, 4051 Basel, Switzerland

\end{document}